\documentclass[a4paper]{amsart}
\usepackage[utf8]{inputenc}
\usepackage[english]{babel}
 %changes font
\usepackage [autostyle, english = american]{csquotes}
\MakeOuterQuote{"}

%\usepackage[parfill]{parskip}    		% Activate to begin paragraphs with an empty line rather than an indent
								% TeX will automatically convert eps --> pdf in pdflatex		
\usepackage{amssymb,mathrsfs,amsmath,graphicx,amsthm,microtype,tikz-cd,bbm,wasysym,enumitem,comment,cases,hyperref,subcaption,caption,mathrsfs,tkz-graph,float}

\usepackage{csquotes}
\usepackage{yhmath}
\usepackage{cancel}
\usepackage{color}
\usepackage{siunitx}
\usepackage{array}
\usepackage{multirow}
\usepackage{amssymb}
\usepackage{gensymb}
\usepackage{tabularx}
\usepackage{extarrows}
\usepackage{booktabs}
\usetikzlibrary{fadings}
\usetikzlibrary{patterns}
\usetikzlibrary{shadows.blur}
\usetikzlibrary{shapes}
\usepackage[
backend=biber,
style=alphabetic,
sorting=nyt
]{biblatex}
 
\addbibresource{bibliografia.bib}

\usetikzlibrary{babel}	
\tikzset{%
  symbol/.style={
    draw=none,
    every to/.append style={
      edge node={node [sloped, allow upside down, auto=false]{$#1$}}
    },
  },
}

\newtheorem{thm}{Theorem}[section]
\newtheorem{cor}[thm]{Corollary}
\newtheorem{lem}[thm]{Lemma}
\newtheorem*{lem*}{Lemma}
\newtheorem{defi}[thm]{Definition}
\newtheorem{obs}{Observation}
\newtheorem{prop}[thm]{Proposition}

\newtheorem{ej}[thm]{Example}
\newtheorem*{thm*}{Teorema}
\newtheorem*{remark}{Remark}

\setlength{\parskip}{1em}

\date{}
\title{When the Weak Separation Condition implies the Generalize Finite Type in $\R^d$}
\author{Kevin G. Hare}
\address{Department of Pure Mathematics \\
          University of Waterloo \\
          Waterloo, Canada}
\email{kghare@uwaterloo.ca}
\thanks{Research of K.G. Hare was supported, in part, by NSERC Grant 2025-03965}
\author{Joaquin G. Prandi}
\address{Department of Pure Mathematics \\
          University of Waterloo \\
          Waterloo, Canada}
\email{jgprandi@uwaterloo.ca, prandijoaquin@gmail.com}
\thanks{Research of J. G. Prandi was supported, in part, by NSERC Grant 2025-03965}

\def\E{\mathcal{E}}
\def\F{\mathcal{F}}

\def\cG{\mathcal{G}}

\def\R{\mathbb{R}}
\def\cS{\mathcal{S}}
\def\sS{\mathscr{S}}

\def\Z{\mathbb{Z}}
\def\K{[-1/2,1/2]^d}
\def\V{(-1/2,1/2)^d}
\def\dim{\operatorname{dim}}
\def\supp{\operatorname{supp}}
\def\ldim{\underline{\operatorname{dim}}_{loc}}
\def\udim{\overline{\operatorname{dim}}_{loc}}
\def\Assumptions{Let $\{S_i\}_{i=0}^n$ be an IFS such that $S_i=r_iA_ix+d_i$, with attractor $\K$, $0<r_i<1$. Further, assume that $A_i$ fixes $\K$}
\def\inte{\operatorname{int}}
\newcounter{relctr} %% <- counter for relations
\everydisplay\expandafter{\the\everydisplay\setcounter{relctr}{0}} %% <- reset every eq
 %% <- label format

\AtBeginDocument{} %% <- store original definition
\graphicspath{ {Pictures/}}

\begin{document}
\begin{abstract}

Let $\cS$ be an iterated function system in $\R^d$, with full support and some restrictions on the allowable rotations.  We show that $\cS$ satisfies the weak separation condition if and only if it satisfies the generalized finite-type condition. With this in mind, we extend the notion of net intervals from $\R$ to $\R^d$. We also use net intervals to calculate the local dimension of a self-similar measure with the finite-type condition and full support. 
\end{abstract}
\maketitle

%$\cS$ cS is IFS

%$\Lambda_\alpha$ is words of depth $\alpha$

%$\sS_\alpha=\{S_\sigma\}_{\sigma\in\Lambda_\alpha}$ sS

%E= E cS(V)=$\E= \E_\cS(V)$

\section{Introduction}
Consider an Iterated Function System (IFS) comprised of similarities $\{S_i\}_{i=1}^k$ on $\R^d$. It is well known (see \cite{falconer}) that there is a unique non-empty compact set $K$, called the \emph{attractor} (also called the \emph{self-similar set}) of the IFS, such that \[K=\bigcup_i S_i(K).\] The dimensional properties of such sets are of great interest. If the IFS satisfied the Open Set Condition (OSC), then a great deal is understood, and several techniques are known. The Sierpinski triangle is one such example. Many interesting examples do not have this property. In \cite{WSPfirstappearance} the weak separation condition (WSC) was introduced. This condition allows for limited types of overlap. In \cite{weakseparationequivalence} various equivalent definitions of the WSC were presented. The geometrical properties of IFS with the WSC are still difficult to prove. When combinatorial properties are introduced to the overlaps, much more is known. This is how the finite-type condition (FTC) and the generalized finite-type condition (GFTC) were introduced in \cite{FTCfirstappearance} and \cite{GFTCfirstappearance}, respectively. The IFSs that satisfy the generalized finite-type condition have proven to be more tractable. 

We know that OSC implies FTC; the FTC implies GFTC which, in turn, implies the WSC. In \cite{LyapunovFeng}, it was shown that if the contraction ratios are equal, positive, and the attractor is the interval $[0,1]$, then the WSC is equivalent to the GFTC. Later in \cite{hare/hare/rutar}, this result was extended to any kind of contraction and support $[0,1]$. We also note that the techniques in \cite{hare/hare/rutar} are related to the calculation of the local dimension of self-similar measures that were used in \cite{HareHareMatthews}. In particular, in \cite{HareHareMatthews} an algorithm is developed to calculate the local dimension of a self-similar measure.

In \cite{hare/hare/rutar} the use of net intervals is key. The net intervals are determined by a neighbor set. Connecting the notion between GFTC and WSC, the authors of \cite{hare/hare/rutar} propose a new notion called finite neighbor condition (FNC). They proceed to show that the FNC is equivalent to the GFTC, and finally they prove that the WSC implies the FNC. A cornerstone of the work done is the concept of net intervals and neighbor sets; we note that the definition of the net interval in $\R$ does not extend to $\R^d$. In Definition \ref{netinterval} we propose a new definition of the net interval and the neighbor set, which works in $\R^d$. It is important to note that this definition is not the only one possible. In $\R$, there are examples with IFS for which the classical definition and the one we propose give a different net interval. Although not exactly the same definition as the classical one, it conserves the same kind of good properties as the classical one.

We organize this paper as follows. In Section \ref{Sdefinitios} we give the definitions that we will use in the rest of this paper. This includes the definition that we propose for net intervals and neighbor sets. In Section \ref{SCharact} we give different characterization of the different overlapping conditions. In Section \ref{SEquiv} we show the main result of this paper, when the IFS has full support and the rotations fix the hypercube, then the WSC implies the GFTC\textsubscript{co}. 

 Section \ref{SMeasures} does not follow directly from Section \ref{SEquiv}. Instead, it is a practical application on how to compute the local dimension of self-similar measures. To be more precise, we will extend the algorithm to calculate the local dimension of self-similar measures presented in \cite{HareHareMatthews} from $\R$ to $\R^d$. We give the definitions of self-similar measures and the local dimension of a measure at a point. We then define transition matrices and associated a weighted directed graph with the system. We show that the associated graph has a unique essential class. This allows us to conclude that the graph we present here has the same properties as the one in \cite{HareHareMatthews}. We note that the graphs in \cite{HareHareMatthews} are for IFS in $\R$, we work in $\R^d$. The graphs obtained here for $\R$ are equivalent to the ones in \cite{HareHareMatthews}. We can also see that the properties we obtain from the graphs for $\R^d$, are similar to those for the one in $\R$. In addition, with some care, the same techniques developed in \cite{HareHareMatthews} will follow for IFS in $\R^d$.
%\part{WSP implies the GFTC}

\section{Definitions}\label{Sdefinitios}

In this section, we present the definitions we will work with in this paper. We will start with the definitions of an IFS. We then define the open set condition (OSC), weak separation condition (WSC), and (generalized) finite-type condition ((G)FTC). At this point, we present our definition of net intervals and the neighbor set. We close this section with the definition of the finite neighbor condition that we will use in this paper.

We recall that an IFS is a set $\cS=\{S_i\}_{i=0}^n$ of similarities in $\R^d$. That is, $\|S_i(x)-S_i(y)\|=r_i\|x-y\|$ with $r_i\in (0,1)$ for $i\in\{0,\dots,n\}$. From this we can deduce $S_i(x)=r_i A_i x+d_i$ where $A_i$ are orthogonal matrices in $\R^d$ and $d_i\in \R^d$.

We also assume that an IFS in this paper has the attractor $K\subset[-1/2,1/2]^d$ and that the closure of the convex hull of $K$, $\operatorname{cl(hull(}K))$, is the hypercube, unless otherwise stated.

 We recall that an IFS for which all contraction ratios are equal is called \emph{equicontractive}.
 
\begin{defi}\label{OSC}
    An IFS $\cS=\{S_i\}_{i=1}^k$ is said to satisfy the \emph{open set condition} (OSC) if there is a non-empty bounded open set $V$ such that $S_i(V)\subset V$ for all $i$, and $S_i(V)\cap S_j(V)=\emptyset$ for all $i\neq j$. If $V=\inte(\operatorname{hull}(K))$, then we say that $\cS$ satisfies the \emph{convex open set condition} (OSC\textsubscript{co}).
\end{defi}

We say that a set $V$ is \emph{invariant} under $\cS=\{S_i\}_{i=1}^k$ if $S_i(V)\subset V$ for all $i$. Hence, $\cS$ satisfies the OSC if and only if there is a non-empty bounded invariant open set $V$ such that $S_i(V)\cap S_j(V)=\emptyset$ if $i\neq j$.

The OSC tells us that the different parts of the attractor do not interact meaningfully with each other as one iterates the IFS. The weak separation condition and the finite-type condition allow for overlaps. To give these definitions, we need additional notation.

Let $\Sigma=\{1,\dots,k\}$ and $\Sigma^*$ denote the set of all finite words in $\Sigma$. We use $\epsilon$ for the \emph{empty word}. We note that $\epsilon$ is the word of length $0$. Given $\sigma=(\sigma_1,\dots,\sigma_j)\in \Sigma^*$, we put
\[\sigma^-=(\sigma_1,\dots,\sigma_{j-1}),\ S_\sigma=S_{\sigma_1}\circ\dots\circ S_{\sigma_j}\ \text{ and }\ r_\sigma=\prod_{i=1}^j r_{\sigma_i}.\]
We also note that $S_\epsilon\text{ is the identity}.$
Given $\alpha>0$, put $\Lambda_\alpha=\{\sigma \in \Sigma^*:r_\sigma<\alpha\leq r_{\sigma^-}\}$ and set $\sS_\alpha=\{S_\sigma\}_{\sigma\in\Lambda_\alpha}$.
We also differentiate the smallest contraction ratio as $r_{\min}=\min_i\{r_i\}.$

The next is one of multiple equivalent definitions for the weak separation condition. For a more complete list of equivalences, see \cite{weakseparationequivalence}.
\begin{defi}\label{def 2.2}
    An IFS is said to satisfy the \emph{weak separation condition} (WSC) if there is some $x_0\in \R^d$ and integer $N$ such that for $\alpha$ and a finite word $\tau$, any close ball with radius $\alpha$ does not contain more than $N$ distinct points of the form $S_\sigma(S_\tau(x_0))$ for $\sigma\in \Lambda_\alpha$.
\end{defi}

\begin{obs}
    Equivalently, an IFS satisfies the WSC if for all $x$ there is some $N$ such that for $\alpha$ and a finite word $\tau$, any close ball of radius $\alpha$ contains at most $N$ distinct points of the form $S_\sigma(S_\tau(x))$ for $\sigma\in \Lambda_\alpha$. 

Note that we can change the balls in the definition of WSC to hypercubes or to hypercubes in a grid. To see this we note any ball of radius $\alpha$ is contained in a hypercube of diameter $\sqrt{d}2\alpha$, and any cube of diameter $\alpha$ is contained in a sphere of radius $\alpha$. 

We say a hypercube is on a grid if there is an $r\in \R^+$ such that the vertices of the hypercube are contained in $r\Z^d$. Any hypercube of diameter $r$ is contained in the union of at most $2^d$ hypercubes on a grid of diameter $r$. Hence, we can also change the balls in the definition for hypercubes on a grid in the definition of WSC.
\end{obs}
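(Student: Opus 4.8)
The statement packages two independent robustness facts about Definition \ref{def 2.2}: first, that the single base point $x_0$ may be replaced by an arbitrary $x$ (at the cost of letting $N$ depend on $x$); and second, that the controlling balls of radius $\alpha$ may be replaced by hypercubes, or by hypercubes on a grid. The plan is to treat these separately. In both cases one implication is immediate — for the base point, specializing the ``for all $x$'' statement to $x=x_0$ recovers Definition \ref{def 2.2}, and for the shapes, a ball of radius $\alpha$ already sits inside a cube while a small cube sits inside a ball — so the work lies in the reverse inclusions, which I would obtain by elementary covering estimates.

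For base-point independence, assume Definition \ref{def 2.2} holds for $x_0$ with constant $N$, and fix an arbitrary $x$, a scale $\alpha$, a word $\tau$, and a ball $B$ of radius $\alpha$. The engine is the identity $\|S_\sigma(S_\tau(x))-S_\sigma(S_\tau(x_0))\|=r_\sigma r_\tau\|x-x_0\|<\alpha\,r_\tau\|x-x_0\|=:\alpha\delta$, valid for every $\sigma\in\Lambda_\alpha$ since $r_\sigma<\alpha$. Thus each point $S_\sigma(S_\tau(x))\in B$ lies within $\alpha\delta$ of its companion $S_\sigma(S_\tau(x_0))$, so all companions lie in the concentric ball of radius $\alpha(1+\delta)$; covering that ball by a number $M=M(\delta,d)$ of balls of radius $\alpha$ depending only on $\delta$ and the dimension and applying the hypothesis to each yields at most $MN$ distinct companions. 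The cleanest special case to keep in mind is $x=S_\rho(x_0)$ an orbit point, where $S_\sigma(S_\tau(x))=S_\sigma(S_{\tau\rho}(x_0))$ is literally a base-point-$x_0$ point with inner word $\tau\rho$, so the bound $N(x)=N$ is immediate.

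The step I expect to be the main obstacle is precisely that the correspondence $S_\sigma(S_\tau(x))\mapsto S_\sigma(S_\tau(x_0))$ need not be injective: two words $\sigma\neq\sigma'$ can give similarities that agree at $S_\tau(x_0)$ but not at $S_\tau(x)$, so several distinct $x$-points may collapse onto a single $x_0$-point, and the count $MN$ on companions would then fail to bound the $x$-points. To close this gap I would bound the \emph{collapse multiplicity}: the words in $\Lambda_\alpha$ sending the fixed point $S_\tau(x_0)$ to a fixed image $q$ produce $x$-points of the form $q+r_\sigma A_\sigma w$ with $w=S_\tau(x)-S_\tau(x_0)$ fixed, so distinctness is governed entirely by the pairs $(r_\sigma,A_\sigma)$. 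Under the paper's standing hypothesis that the rotations fix the cube, the orthogonal parts $A_\sigma$ lie in the finite symmetry group of the hypercube, which reduces the question to controlling the distinct contraction ratios; the finiteness of $\Lambda_\alpha$ together with the map-counting reformulation of the weak separation condition in \cite{weakseparationequivalence} should then supply the required uniform multiplicity bound. It is this estimate, rather than the covering, that carries the real content.

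Finally, for the interchange of shapes I would argue, as indicated after the statement, purely by nested containments: a ball of radius $\alpha$ sits in a cube of diameter $2\sqrt{d}\,\alpha$, a cube of diameter $\alpha$ sits in a ball of radius $\alpha$, and a cube of diameter $r$ meets at most $2^d$ of the grid cubes of diameter $r$. Each containment bounds the point-count for one family by a multiple depending only on $d$ of the point-count for the next, at the same scale up to a fixed constant factor that is absorbed into $\alpha$; running these comparisons around the cycle ball $\to$ cube $\to$ grid cube $\to$ ball shows that the finiteness expressed by the weak separation condition is insensitive to which of the three families of test regions is used.
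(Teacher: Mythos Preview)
Your treatment of the shape interchange (balls $\leftrightarrow$ cubes $\leftrightarrow$ grid cubes) is correct and is exactly the inline argument the observation itself supplies; nothing further is needed there.

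For the base-point independence, note first that the paper gives no proof: the sentence preceding Definition~\ref{def 2.2} already refers the reader to \cite{weakseparationequivalence} for the list of equivalent formulations, and the first line of the observation is simply one of those. So you are attempting to reprove from scratch a known equivalence the paper merely cites.

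Your attempt correctly isolates the real difficulty --- the companion map $S_\sigma(S_\tau(x))\mapsto S_\sigma(S_\tau(x_0))$ need not be injective --- but your proposed resolution has a gap. You invoke ``the paper's standing hypothesis that the rotations fix the cube'' to force the orthogonal parts into a finite group. That hypothesis is \emph{not} in force here: it is introduced only in Section~\ref{SEquiv} for Lemma~\ref{lem4.1} and Theorem~\ref{thm4.4}. The observation is a general statement about WSC with no restriction on the rotations, so this step is illegitimate. Moreover, even with a finite rotation group you would still need to bound the number of distinct contraction ratios $r_\sigma$ occurring among $\sigma\in\Lambda_\alpha$ with $S_\sigma(S_\tau(x_0))=q$, which is not automatic.

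You then fall back on ``the map-counting reformulation of the weak separation condition in \cite{weakseparationequivalence}'', and this is indeed the right idea --- but once you grant yourself that reformulation, your entire covering argument is superfluous: a bound on distinct \emph{maps} $S_\sigma$ with $\sigma\in\Lambda_\alpha$ meeting an $\alpha$-ball immediately bounds distinct \emph{points} $S_\sigma(S_\tau(x))$ for every $x$. The passage from the $x_0$-point-count form to the map-count form is precisely the nontrivial equivalence established in \cite{weakseparationequivalence}, so your argument is effectively circular. The honest route is the paper's: state the equivalence and cite Zerner.
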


Although it is not clear from the Definition \ref{def 2.2}, the idea behind the WSC is that there is, at different levels, a lower bound on how small the overlaps can be.
\begin{defi}
    For any $V\subset \R^d$ let 
    \begin{equation}\label{eq2.2}
        \E_\cS(V)=\bigcup_{\alpha>0}\left\{S_\sigma^{-1}\circ S_\tau:\sigma,\tau\in\Lambda_\alpha,S_\sigma(V)\cap S_\tau(V)\neq\emptyset\right\}.
    \end{equation}
\end{defi}
\begin{obs}
    If an IFS $\cS=\{S_i\}$ satisfies the OSC with the invariant open set $V$, then $\E_\cS(V)$ is the identity map.
\end{obs}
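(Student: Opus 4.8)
The plan is to prove the sharper statement that $\E_\cS(V)=\{\mathrm{id}\}$ exactly. That the identity always lies in $\E_\cS(V)$ is immediate: for any $\alpha$ with $\Lambda_\alpha\neq\emptyset$, choose $\sigma\in\Lambda_\alpha$; since $V$ is nonempty we have $S_\sigma(V)\cap S_\sigma(V)=S_\sigma(V)\neq\emptyset$, so the map $S_\sigma^{-1}\circ S_\sigma=\mathrm{id}$ appears in the union \eqref{eq2.2}. The substance of the claim is therefore that the overlap condition $S_\sigma(V)\cap S_\tau(V)\neq\emptyset$ with $\sigma,\tau\in\Lambda_\alpha$ forces $\sigma=\tau$, so that no nontrivial composition $S_\sigma^{-1}\circ S_\tau$ is ever admitted.

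First I would record that each level set $\Lambda_\alpha$ is an antichain for the prefix order, i.e.\ no word of $\Lambda_\alpha$ is a proper prefix of another. Indeed, if $\sigma$ were a proper prefix of $\tau$, then $\sigma$ is also a prefix of $\tau^-$, so that $r_{\tau^-}\leq r_\sigma$ (the extra contraction ratios lie in $(0,1)$); combining this with $r_\sigma<\alpha$, which follows from $\sigma\in\Lambda_\alpha$, contradicts $\alpha\leq r_{\tau^-}$, which follows from $\tau\in\Lambda_\alpha$. Hence two distinct elements of $\Lambda_\alpha$ are always incomparable under the prefix order.

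The key step is then a separation lemma valid under the OSC: if $V$ is invariant and $S_i(V)\cap S_j(V)=\emptyset$ for $i\neq j$, then for any two incomparable words $\sigma,\tau$ one has $S_\sigma(V)\cap S_\tau(V)=\emptyset$. To prove it, let $\omega$ be the longest common prefix and write $\sigma=\omega\,a\,\sigma'$ and $\tau=\omega\,b\,\tau'$ with distinct letters $a\neq b$. Invariance gives $S_{\sigma'}(V)\subset V$ and $S_{\tau'}(V)\subset V$, whence $S_\sigma(V)\subset S_\omega(S_a(V))$ and $S_\tau(V)\subset S_\omega(S_b(V))$. Since $S_a(V)\cap S_b(V)=\emptyset$ by the OSC and $S_\omega$ is injective, these two images are disjoint, and therefore so are $S_\sigma(V)$ and $S_\tau(V)$. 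Feeding the antichain property from the previous paragraph into this lemma shows that distinct $\sigma,\tau\in\Lambda_\alpha$ have disjoint images, which is exactly what was needed.

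The main obstacle is the separation lemma: one must use invariance to collapse each image into the cylinder set of its first differing letter, and then invoke injectivity of $S_\omega$, taking care that the inclusion $S_\sigma(V)\subset S_\omega(S_a(V))$ is justified for words of unequal length (here the tails $\sigma'$, $\tau'$ may differ in length, or be empty). Once this geometric disjointness is established, the reduction to $\sigma=\tau$, and hence the conclusion $\E_\cS(V)=\{\mathrm{id}\}$, is purely formal.
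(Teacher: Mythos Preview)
Your argument is correct. The paper records this observation without proof, treating it as an immediate consequence of the definitions; your write-up supplies the routine details (the antichain property of $\Lambda_\alpha$ and the inherited disjointness of iterated images under the OSC) that the paper leaves implicit.
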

\begin{defi}\label{GFTC}
    An IFS $\cS=\{S_i\}$ is said to satisfy the \emph{generalized finite type condition} (GFTC), if $\E_\cS(V)$ is finite for some non-empty bounded invariant open set $V$.

    An IFS $\cS=\{S_i\}$ is said to satisfy \emph{finite type condition} (FTC), if $\E_\cS(V)$ is finite for some non-empty bounded invariant open set $V$ and the contraction ratios are logarithmically commensurate (i.e. $\log(r_i)/\log(r_j)\in \mathbb{Q}$).

    When $\inte(\operatorname{hull}(K))\neq\emptyset$, we say that $\cS$ satisfies the \emph{convex generalized finite type condition} (GFTC\textsubscript{co}) if it satisfies GFTC with the invariant open set being the interior of the convex hull of the attractor $K$. If $\inte(\operatorname{hull}(K))=\emptyset$, then $K$ is contained in a subspace of $\R^d$.
\end{defi}
 This is not the original definition of GFTC, but was shown to be equivalent in \cite{equivfinitetype}. The idea behind these definitions is that, once we normalize, the overlaps are of finitely many types. 

 Let $\cS=\{S_i\}_{i=0}^n$ be an equicontractive IFS such that $S_i(x)=r A_ix+d_i$. We define 
\[N_V(S_\sigma)=\{S_\sigma^{-1}\circ S_\tau: |\sigma|=|\tau|\text{ and }S_\sigma(V)\cap S_\tau(V)\}.\]
We note that $\E= \E_\cS(V)$ is finite if only there are finitely many $N_V(S_\sigma)$.

These notions will be used in, for example, Lemma \ref{gencharactgftc}.

\begin{remark}
    It is known that
 \begin{center}
\tikzset{every picture/.style={line width=0.75pt}} %set default line width to 0.75pt        
\begin{tikzpicture}[x=0.75pt,y=0.75pt,yscale=-1,xscale=1]
%uncomment if require: \path (0,336); %set diagram left start at 0, and has height of 336

% Text Node
\draw (121,122.4) node [anchor=north west][inner sep=0.75pt]    {$OSC$};
% Text Node
\draw (121,172.4) node [anchor=north west][inner sep=0.75pt]    {$OSC_{co}$};
% Text Node
\draw (221,122.4) node [anchor=north west][inner sep=0.75pt]    {$GFTC$};
% Text Node
\draw (222,172.4) node [anchor=north west][inner sep=0.75pt]    {$GFTC_{co}$};
% Text Node
\draw (330,122.4) node [anchor=north west][inner sep=0.75pt]    {$WSC$};
% Text Node
\draw (181,122.4) node [anchor=north west][inner sep=0.75pt]    {$\subseteq $};
% Text Node
\draw (281,122.4) node [anchor=north west][inner sep=0.75pt]    {$\subseteq $};
% Text Node
\draw (132.4,164) node [anchor=north west][inner sep=0.75pt]  [rotate=-270]  {$\subseteq $};
% Text Node
\draw (232.4,164) node [anchor=north west][inner sep=0.75pt]  [rotate=-270]  {$\subseteq $};
% Text Node
\draw (181,172.4) node [anchor=north west][inner sep=0.75pt]    {$\subseteq $};

\end{tikzpicture}

\end{center}
  \end{remark}

We will now introduce the interrelated concepts of net intervals and neighbor sets. 

The underlying concept of net intervals is of great use in the literature, especially when talking about IFS with overlaps. The idea behind a net interval is to divide the attractor $K$ into smaller pieces. We can think of the attractor in levels, and these levels are determined by the iterations of the IFS. At a given level, we want to divide $K$ by the overlaps of $S_\sigma(K)$. In other words, we want the pieces to be determined by how many covers they have. The net intervals are determined by what $S_\sigma(K)$ covers it. That is, all points in a net interval have the same number of $S_\sigma(K)$ as a covering. 

 We propose the next definition of net intervals in $\R^d$.  We define net intervals in the case that the attractor $K$ of the IFS $\cS$ has as convex hull $\K$.
\begin{defi}\label{netinterval}
    Let $\cS=\{S_i\}$ be an IFS such that the self-similar set is $K\subset\K$. Let $\delta>0$ and $V=\V$. For $x\in \K\setminus \cup_{\sigma\in \Lambda_\delta}S_\sigma(\partial\K)$ define the \emph{net interval} of $x$ at level $\delta$ as
\begin{equation}\label{eqninterval}
    \Delta_\delta(x)=\left(\bigcap_{\substack{T\in \sS_\delta \\ x\in T(V)}}T(\overline{V})\right)\setminus \left(\bigcup_{\substack{T\in \sS_\delta\\ x\notin T(V)}}T(V)\right) \text{ and }\inte(\Delta_\delta(x))\cap K\neq\emptyset.
\end{equation}
Define $\F_\delta$ as the collection of all net intervals at level $\delta$. 

If $x\in \cup_{\sigma\in \Lambda_\delta}S_\sigma(\partial\K)$ for some $\delta$, then $x\in\cup_{\sigma\in \Lambda_{\delta'}}S_\sigma(\partial\K)$ for all $\delta'\leq\delta$ or $x\notin K$. In addition, any $x\in\cup_{\sigma\in \Lambda_\delta}S_\sigma(\partial\K)$ will be contained in multiple net intervals.
\end{defi}
\begin{ej}\label{exnetinterval}
     Consider the IFS given by
        \begin{align*}
        S_1(x,y)&=1/2(x,y)+(-1/4,1/4),   &S_3(x,y)=1/2(x,y)+(1/4,1/4),\\
        S_2(x,y)&=1/3(x,y), &S_4(x,y)=1/2(x,y)-(1/4,1/4),\\
        S_5(x,y)&=1/2(x,y)+(1/4,-1/4).        
    \end{align*}
    Then the attractor of the IFS is $[-1/2,1/2]\times[-1/2,1/2]$. For $1>\alpha>1/2$ we have $\sS_\alpha=\{S_1,S_2,S_3,S_4,S_5\}$. Consider the point $\textbf{x}_1=(-1/3,1/4)$. Then $\textbf{x}_1\in S_1(V)$ and not in $S_i(V)$ for $i=2,3,4,5$. Further $\Delta_\alpha(\textbf{x}_1)=S_1([-1/2,1/2]\times[-1/2,1/2])\setminus\cup_{i=2}^5S_i(V)$. In Figure \ref{firstexample}, we can see on the left the image of $[-1/2,1/2]\times[-1/2,1/2]$ by $\{S_1,S_2,S_3,S_4,S_5\}$ and on the right we see $\Delta(\textbf{x}_1)$.

    \begin{figure}[h]
    \centering

% Pattern Info
 
\tikzset{
pattern size/.store in=\mcSize, 
pattern size = 5pt,
pattern thickness/.store in=\mcThickness, 
pattern thickness = 0.3pt,
pattern radius/.store in=\mcRadius, 
pattern radius = 1pt}
\makeatletter
\pgfutil@ifundefined{pgf@pattern@name@_abi33ot3k}{
\pgfdeclarepatternformonly[\mcThickness,\mcSize]{_abi33ot3k}
{\pgfqpoint{-\mcThickness}{-\mcThickness}}
{\pgfpoint{\mcSize}{\mcSize}}
{\pgfpoint{\mcSize}{\mcSize}}
{
\pgfsetcolor{\tikz@pattern@color}
\pgfsetlinewidth{\mcThickness}
\pgfpathmoveto{\pgfpointorigin}
\pgfpathlineto{\pgfpoint{0}{\mcSize}}
\pgfusepath{stroke}
}}
\makeatother

% Pattern Info
 
\tikzset{
pattern size/.store in=\mcSize, 
pattern size = 5pt,
pattern thickness/.store in=\mcThickness, 
pattern thickness = 0.3pt,
pattern radius/.store in=\mcRadius, 
pattern radius = 1pt}
\makeatletter
\pgfutil@ifundefined{pgf@pattern@name@_w6z0ckqio}{
\makeatletter
\pgfdeclarepatternformonly[\mcRadius,\mcThickness,\mcSize]{_w6z0ckqio}
{\pgfpoint{-0.5*\mcSize}{-0.5*\mcSize}}
{\pgfpoint{0.5*\mcSize}{0.5*\mcSize}}
{\pgfpoint{\mcSize}{\mcSize}}
{
\pgfsetcolor{\tikz@pattern@color}
\pgfsetlinewidth{\mcThickness}
\pgfpathcircle\pgfpointorigin{\mcRadius}
\pgfusepath{stroke}
}}
\makeatother

% Pattern Info
 
\tikzset{
pattern size/.store in=\mcSize, 
pattern size = 5pt,
pattern thickness/.store in=\mcThickness, 
pattern thickness = 0.3pt,
pattern radius/.store in=\mcRadius, 
pattern radius = 1pt}
\makeatletter
\pgfutil@ifundefined{pgf@pattern@name@_xyst2nhwr lines}{
\pgfdeclarepatternformonly[\mcThickness,\mcSize]{_xyst2nhwr}
{\pgfqpoint{0pt}{0pt}}
{\pgfpoint{\mcSize+\mcThickness}{\mcSize+\mcThickness}}
{\pgfpoint{\mcSize}{\mcSize}}
{\pgfsetcolor{\tikz@pattern@color}
\pgfsetlinewidth{\mcThickness}
\pgfpathmoveto{\pgfpointorigin}
\pgfpathlineto{\pgfpoint{\mcSize}{0}}
\pgfusepath{stroke}}}
\makeatother

% Pattern Info
 
\tikzset{
pattern size/.store in=\mcSize, 
pattern size = 5pt,
pattern thickness/.store in=\mcThickness, 
pattern thickness = 0.3pt,
pattern radius/.store in=\mcRadius, 
pattern radius = 1pt}
\makeatletter
\pgfutil@ifundefined{pgf@pattern@name@_303clpc4i}{
\pgfdeclarepatternformonly[\mcThickness,\mcSize]{_303clpc4i}
{\pgfqpoint{0pt}{0pt}}
{\pgfpoint{\mcSize+\mcThickness}{\mcSize+\mcThickness}}
{\pgfpoint{\mcSize}{\mcSize}}
{
\pgfsetcolor{\tikz@pattern@color}
\pgfsetlinewidth{\mcThickness}
\pgfpathmoveto{\pgfqpoint{0pt}{0pt}}
\pgfpathlineto{\pgfpoint{\mcSize+\mcThickness}{\mcSize+\mcThickness}}
\pgfusepath{stroke}
}}
\makeatother

% Pattern Info
 
\tikzset{
pattern size/.store in=\mcSize, 
pattern size = 5pt,
pattern thickness/.store in=\mcThickness, 
pattern thickness = 0.3pt,
pattern radius/.store in=\mcRadius, 
pattern radius = 1pt}
\makeatletter
\pgfutil@ifundefined{pgf@pattern@name@_whp56n1nz}{
\pgfdeclarepatternformonly[\mcThickness,\mcSize]{_whp56n1nz}
{\pgfqpoint{0pt}{-\mcThickness}}
{\pgfpoint{\mcSize}{\mcSize}}
{\pgfpoint{\mcSize}{\mcSize}}
{
\pgfsetcolor{\tikz@pattern@color}
\pgfsetlinewidth{\mcThickness}
\pgfpathmoveto{\pgfqpoint{0pt}{\mcSize}}
\pgfpathlineto{\pgfpoint{\mcSize+\mcThickness}{-\mcThickness}}
\pgfusepath{stroke}
}}
\makeatother

% Pattern Info
 
\tikzset{
pattern size/.store in=\mcSize, 
pattern size = 5pt,
pattern thickness/.store in=\mcThickness, 
pattern thickness = 0.3pt,
pattern radius/.store in=\mcRadius, 
pattern radius = 1pt}
\makeatletter
\pgfutil@ifundefined{pgf@pattern@name@_n8m5z0zvy}{
\pgfdeclarepatternformonly[\mcThickness,\mcSize]{_n8m5z0zvy}
{\pgfqpoint{0pt}{0pt}}
{\pgfpoint{\mcSize}{\mcSize}}
{\pgfpoint{\mcSize}{\mcSize}}
{
\pgfsetcolor{\tikz@pattern@color}
\pgfsetlinewidth{\mcThickness}
\pgfpathmoveto{\pgfqpoint{0pt}{\mcSize}}
\pgfpathlineto{\pgfpoint{\mcSize+\mcThickness}{-\mcThickness}}
\pgfpathmoveto{\pgfqpoint{0pt}{0pt}}
\pgfpathlineto{\pgfpoint{\mcSize+\mcThickness}{\mcSize+\mcThickness}}
\pgfusepath{stroke}
}}
\makeatother
\tikzset{every picture/.style={line width=0.75pt}} %set default line width to 0.75pt        

\begin{tikzpicture}[x=1pt,y=1pt,yscale=-1,xscale=1]
%uncomment if require: \path (0,336); %set diagram left start at 0, and has height of 336

%Shape: Rectangle [id:dp3820356906332222] 
\draw  [pattern=_abi33ot3k,pattern size=6pt,pattern thickness=0.75pt,pattern radius=0pt, pattern color={rgb, 255:red, 0; green, 0; blue, 0}] (100,120) -- (140,120) -- (140,160) -- (100,160) -- cycle ;
%Shape: Rectangle [id:dp11458860148111383] 
\draw  [pattern=_w6z0ckqio,pattern size=6pt,pattern thickness=0.75pt,pattern radius=0.75pt, pattern color={rgb, 255:red, 0; green, 0; blue, 0}] (140,120) -- (180,120) -- (180,160) -- (140,160) -- cycle ;
%Shape: Rectangle [id:dp9037311824173566] 
\draw  [pattern=_xyst2nhwr,pattern size=6pt,pattern thickness=0.75pt,pattern radius=0pt, pattern color={rgb, 255:red, 0; green, 0; blue, 0}] (100,160) -- (140,160) -- (140,200) -- (100,200) -- cycle ;
%Shape: Rectangle [id:dp527989031212017] 
\draw  [pattern=_303clpc4i,pattern size=6pt,pattern thickness=0.75pt,pattern radius=0pt, pattern color={rgb, 255:red, 0; green, 0; blue, 0}] (140,160) -- (180,160) -- (180,200) -- (140,200) -- cycle ;
%Shape: Rectangle [id:dp529062958693298] 
\draw  [pattern=_whp56n1nz,pattern size=6pt,pattern thickness=0.75pt,pattern radius=0pt, pattern color={rgb, 255:red, 0; green, 0; blue, 0}] (125,145) -- (155,145) -- (155,175) -- (125,175) -- cycle ;
%Shape: Path Data [id:dp0033321936188244194] 
\draw  [pattern=_n8m5z0zvy,pattern size=6pt,pattern thickness=0.75pt,pattern radius=0pt, pattern color={rgb, 255:red, 0; green, 0; blue, 0}] (300,120) -- (300,145) -- (285,145) -- (285,160) -- (260,160) -- (260,120) -- (300,120) -- cycle ;
%Shape: Rectangle [id:dp09832103285945681] 
\draw   (300,120) -- (340,120) -- (340,160) -- (300,160) -- cycle ;
%Shape: Rectangle [id:dp18453169269465775] 
\draw   (260,160) -- (300,160) -- (300,200) -- (260,200) -- cycle ;
%Shape: Rectangle [id:dp4996391879370258] 
\draw   (300,160) -- (340,160) -- (340,200) -- (300,200) -- cycle ;
%Shape: Rectangle [id:dp13305819716334977] 
\draw   (285,145) -- (315,145) -- (315,175) -- (285,175) -- cycle ;
%Right Arrow [id:dp16608511172284524] 
\draw   (210,153.35) -- (222,153.35) -- (222,150) -- (230,156.7) -- (222,163.4) -- (222,160.05) -- (210,160.05) -- cycle ;

% Text Node
\draw (235,102.4) node [anchor=north west][inner sep=0.75pt]  [font=\footnotesize]  {$ \begin{array}{l}
\Delta_\alpha( \textbf{x}_{1})\\
\end{array}$};

\end{tikzpicture}

    \caption{Example \ref{exnetinterval}. }
    \label{firstexample}
\end{figure}

\end{ej}

\begin{obs} From the definition we get
\begin{itemize}
    \item $K\subset\bigcup_{B\in \F_\delta}B$
    \item If $B,D\in \F_\delta$ then either $B=D$ or $\inte(B)\cap \inte(D)=\emptyset$.
    \item If $\delta>\delta'$ then for $B'\in \F_{\delta'}$ there is a unique $B\in \F_\delta$ such that $B'\subset B$.
\end{itemize}
\end{obs}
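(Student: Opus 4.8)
The plan is to treat the three items in turn, after recording two structural facts that make everything finite and topologically tame. First I would note that $\sS_\delta$ is finite, since every $\sigma\in\Lambda_\delta$ satisfies $\delta r_{\min}\le r_\sigma<\delta$, bounding $|\sigma|$. Writing $Z_\delta=\bigcup_{\sigma\in\Lambda_\delta}S_\sigma(\partial\K)$ and $\mathcal{T}_\delta(x)=\{T\in\sS_\delta:x\in T(V)\}$, each net interval $\Delta_\delta(x)=\bigcap_{T\in\mathcal{T}_\delta(x)}T(\overline V)\setminus\bigcup_{T\notin\mathcal{T}_\delta(x)}T(V)$ is a finite intersection of closed cubes minus a finite union of open cubes, hence closed, and it depends on $x$ only through $\mathcal{T}_\delta(x)$; since $\mathcal{T}_\delta(x)$ ranges over subsets of the finite set $\sS_\delta$, the family $\F_\delta$ is finite. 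I would also record the dual facts $\partial\Delta_\delta(x)\subseteq Z_\delta$ and $\inte(\Delta_\delta(x))\cap Z_\delta=\emptyset$: an interior point on some $T(\partial\K)$ would force $\inte(\Delta_\delta(x))$ to lie in $T(V)$ when $T\in\mathcal{T}_\delta(x)$, or in $\R^d\setminus T(\overline V)$ otherwise, either way contradicting membership in $\partial(T(\overline V))$.

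For the first item I would first handle $x\in K\setminus Z_\delta$. Because $x\notin T(\partial\K)$ for every $T\in\sS_\delta$, each $T$ has either $x\in T(V)$ or $x\notin T(\overline V)$, so the open set $\bigcap_{T\in\mathcal{T}_\delta(x)}T(V)\cap\bigcap_{T\notin\mathcal{T}_\delta(x)}(\R^d\setminus T(\overline V))$ is a neighbourhood of $x$ contained in $\Delta_\delta(x)$. Hence $x\in\inte(\Delta_\delta(x))$, so $\inte(\Delta_\delta(x))\cap K\neq\emptyset$ and $\Delta_\delta(x)\in\F_\delta$; thus $K\setminus Z_\delta\subseteq\bigcup_{B\in\F_\delta}B$. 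The remaining, genuinely topological, point is that boundary points are covered too, for which I would prove that $K\setminus Z_\delta$ is dense in $K$ and invoke closedness of the finite union $\bigcup_{B\in\F_\delta}B$. For density, if some open $U$ met $K$ only inside $Z_\delta$, then since $Z_\delta$ lies in finitely many hyperplanes $H_1\cup\dots\cup H_N$, the Baire category theorem applied to the locally compact space $U\cap K$ yields a relatively open nonempty piece of $K$ inside a single $H_i$; choosing a contraction $S_\tau$ with $S_\tau(K)\subseteq U\cap K\subseteq H_i$ gives $K\subseteq S_\tau^{-1}(H_i)$, contradicting $\operatorname{cl}(\operatorname{hull}(K))=\K$. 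This self-similar blow-up is the one nontrivial step of the item.

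For the second item I would prove the sharper statement that an interior point determines the net interval: if $z\in\inte(\Delta_\delta(x))$ then $z\notin Z_\delta$ by the dual fact above, and for every $T\in\sS_\delta$ one has $z\in T(V)\iff T\in\mathcal{T}_\delta(x)$, since $\inte(\Delta_\delta(x))$ is an open subset of $T(\overline V)$ when $T\in\mathcal{T}_\delta(x)$ and is disjoint from $T(V)$ otherwise. Hence $\mathcal{T}_\delta(z)=\mathcal{T}_\delta(x)$ and $\Delta_\delta(z)=\Delta_\delta(x)$. Applying this with $B=\Delta_\delta(x)$ and $D=\Delta_\delta(y)$, any common interior point $z$ gives $B=\Delta_\delta(z)=D$, so $\inte(B)\cap\inte(D)\neq\emptyset$ forces $B=D$.

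For the third item, uniqueness is short: if $B'\subseteq B_1$ and $B'\subseteq B_2$ with $B_i\in\F_\delta$, pick $z\in\inte(B')\cap K\setminus Z_\delta$ (available by the density just proved); since $\partial B_i\subseteq Z_\delta$ and $z\notin Z_\delta$, the inclusion $z\in B'\subseteq B_i$ gives $z\in\inte(B_i)$, and the second item forces $B_1=B_2$. Existence is where I expect the real difficulty. The natural candidate is $B=\Delta_\delta(z)$ for a generic $z\in\inte(B')\cap K\setminus(Z_\delta\cup Z_{\delta'})$, and the clean structural input is that $\Lambda_{\delta'}$ refines $\Lambda_\delta$: reading off an address of $z$ in $K$, its level-$\delta$ and level-$\delta'$ cut prefixes $\sigma\in\Lambda_\delta$ and $\sigma'\in\Lambda_{\delta'}$ satisfy $\sigma\preceq\sigma'$, so $S_{\sigma'}(\overline V)\subseteq S_\sigma(\overline V)$ and each defining fine cube through $z$ lies in its parent coarse cube. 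The obstacle is to upgrade this into $\Delta_{\delta'}(z)\subseteq\Delta_\delta(z)$, i.e. into the set inclusion $B'\subseteq B$. This amounts to showing that no coarse face crosses the interior of a fine net interval that meets $K$ (equivalently $\inte(B')\cap Z_\delta=\emptyset$), so that $\mathcal{T}_\delta(\cdot)$ is constant on $\inte(B')$ and equals $\mathcal{T}_\delta(z)$. The delicate case is the overlapping one, $z\in T(V)\setminus T(K)$ for a coarse cube $T$, where one cannot directly produce a fine child of $T$ through $z$; here the full-support and hull hypotheses, together with $\inte(B')\cap K\neq\emptyset$, must be used to exclude a coarse face through the fine cell. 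I expect this reconciliation of the cut-set prefix nesting with the overlap geometry to be the crux of the whole observation, with the finiteness, density, and interior-determination facts above serving as the supporting lemmas.
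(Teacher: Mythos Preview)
The paper gives no proof of this observation; it is asserted to follow ``from the definition'' and left at that. Your proposal therefore goes well beyond anything the paper provides, and for the first two items it is correct. The finiteness of $\sS_\delta$, the dual facts $\inte(\Delta_\delta(x))\cap Z_\delta=\emptyset$ and $\partial\Delta_\delta(x)\subset Z_\delta$, the interior-determination lemma driving item~2, and the Baire/blow-up density argument for $K\setminus Z_\delta$ in $K$ (using only the convex-hull hypothesis) are all sound and cleanly organised.

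For item~3 you accurately isolate both the reduction and the obstruction, and you are honest that you do not close the gap. Uniqueness follows from item~2 as you say. For existence you reduce to showing that no coarse face meets the interior of a fine net interval, and you pinpoint the delicate case $z\in S_\sigma(V)\setminus S_\sigma(K)$: there the prefix refinement $\Lambda_\delta\preceq\Lambda_{\delta'}$ gives no child of $S_\sigma$ through $z$, so $Z_\delta\subset Z_{\delta'}$ does not follow from refinement alone. The paper does not address this either. The resolution you gesture at is the right one in the setting where the observation is actually used: under full support $K=\K$ one has $S_\sigma(\K)=\bigcup_{\sigma'}S_{\sigma'}(\K)$ over $\Lambda_{\delta'}$-extensions $\sigma'$, and since an open subset of a closed cube lies in its interior this yields $Z_\delta\subset Z_{\delta'}$; from there, for $x\notin Z_{\delta'}$ the inclusion $\Delta_{\delta'}(x)\subset\Delta_\delta(x)$ can be checked from the two defining conditions (the key point for the second condition being that $x\notin S_\sigma(V)$ together with $x\notin Z_\delta$ forces $x\notin S_\sigma(\K)$, hence $x\notin S_{\sigma'}(V)$ for every extension $\sigma'$). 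In the bare generality of the definition, with $\operatorname{hull}(K)=\K$ but possibly $K\subsetneq\K$, your caution that this step is ``the crux'' is entirely warranted.
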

We note that the classical definition of net interval in $\R$ is given in terms of boundary points. The problem with this approach in $\R^d$ is that there is no clear order to follow. Moreover, while these definitions are not equivalent, they coincide when the IFS is equicontractive. When the IFS is not equicontractive, our way of defining the net interval might result in net intervals that are the union of the classical net intervals, as we observe in the next example.
\begin{ej}\label{exampenetintervaldif}
 
Consider the IFS $\cS=\{S_i\}_{i=0}^4$ given by $S_1(x)= x/3-1/3,S_2(x)= x/3,S_3(x) x/3+1/3,S_4(x)= x/9$. In particular, we have $S_1([-1/2,1/2])=[-1/2,-1/6]$, $S_2([-1/2,1/2])=[-1/6,1/6]$, $S_1([-1/2,1/2])=[1/6,1/2]$ and $S_1([-1/2,1/2])=[-1/18,1/18]$. For $1/3<\alpha<1$, we have the net intervals $\Delta_1=[-1/2,-1/6]$, $\Delta_2=[-1/6,-1/18]\cup[1/18,1/6]$, $\Delta_3=[-1/6,1/6]$, and $\Delta_4=[1/6,1/2]$. The classical definition of net interval would divide $\Delta_2$ into two net intervals. The family of classical net intervals is $\Delta_1=[-1/2,-1/6]$, $\Delta_2^a=[-1/6,-1/18]$, $\Delta_2^b[1/18,1/6]$, $\Delta_3=[-1/6,1/6]$, and $\Delta_4=[1/6,1/2]$. In Figure \ref{diffnetintervals} we can see the net intervals of this example as presented with our definition.

\begin{figure}[h]
    \centering

\tikzset{every picture/.style={line width=0.75pt}} %set default line width to 0.75pt        

\begin{tikzpicture}[x=0.75pt,y=0.75pt,yscale=-1,xscale=1]
%uncomment if require: \path (0,336); %set diagram left start at 0, and has height of 336

%Straight Lines [id:da501605349756472] 
\draw    (100,100) -- (190,100) ;
%Straight Lines [id:da21870141544505617] 
\draw    (280,100) -- (370,100) ;
%Straight Lines [id:da12560593398477293] 
\draw    (190,110) -- (280,110) ;
%Straight Lines [id:da8969313665390141] 
\draw    (220,120) -- (250,120) ;
%Straight Lines [id:da8041016241000298] 
\draw    (100,150) -- (190,150) ;
%Straight Lines [id:da7639191698763702] 
\draw    (280,150) -- (370,150) ;
%Straight Lines [id:da4136972951483894] 
\draw    (220,170) -- (250,170) ;
%Straight Lines [id:da6014144694047978] 
\draw    (190,160) -- (220,160) ;
%Straight Lines [id:da5937240915709557] 
\draw    (250,160) -- (280,160) ;
%Straight Lines [id:da2744891731087533] 
\draw    (205,160) -- (228.14,150.74) ;
\draw [shift={(230,150)}, rotate = 158.2] [color={rgb, 255:red, 0; green, 0; blue, 0 }  ][line width=0.75]    (10.93,-3.29) .. controls (6.95,-1.4) and (3.31,-0.3) .. (0,0) .. controls (3.31,0.3) and (6.95,1.4) .. (10.93,3.29)   ;
%Straight Lines [id:da9139447765368401] 
\draw    (265,160) -- (241.86,150.74) ;
\draw [shift={(240,150)}, rotate = 21.8] [color={rgb, 255:red, 0; green, 0; blue, 0 }  ][line width=0.75]    (10.93,-3.29) .. controls (6.95,-1.4) and (3.31,-0.3) .. (0,0) .. controls (3.31,0.3) and (6.95,1.4) .. (10.93,3.29)   ;

% Text Node
\draw (111,82.4) node [anchor=north west][inner sep=0.75pt]  [font=\tiny]  {$S_{1}([ -1/2,1/2])$};
% Text Node
\draw (200,92.4) node [anchor=north west][inner sep=0.75pt]  [font=\tiny]  {$S_{2}([ -1/2,1/2])$};
% Text Node
\draw (290,82.4) node [anchor=north west][inner sep=0.75pt]  [font=\tiny]  {$S_{3}([ -1/2,1/2])$};
% Text Node
\draw (260,121.4) node [anchor=north west][inner sep=0.75pt]  [font=\tiny]  {$S_{4}([ -1/2,1/2])$};
% Text Node
\draw (131,141) node [anchor=north west][inner sep=0.75pt]  [font=\tiny]  {$\Delta _{1}$};
% Text Node
\draw (231,171.4) node [anchor=north west][inner sep=0.75pt]  [font=\tiny]  {$\Delta _{3}$};
% Text Node
\draw (317,141) node [anchor=north west][inner sep=0.75pt]  [font=\tiny]  {$\Delta _{4}$};
% Text Node
\draw (229,141) node [anchor=north west][inner sep=0.75pt]  [font=\tiny]  {$\Delta _{2}$};
% Text Node
\draw (440,105) node   [align=left] {\begin{minipage}[lt]{68pt}\setlength\topsep{0pt}
Level $\displaystyle \alpha $
\end{minipage}};
% Text Node
\draw (430,145) node   [align=left] {\begin{minipage}[lt]{68pt}\setlength\topsep{0pt}
Net intervals at level $\displaystyle \alpha $
\end{minipage}};

\end{tikzpicture}

    \caption{Net intervals. Example \ref{exampenetintervaldif}}
    \label{diffnetintervals}
\end{figure}
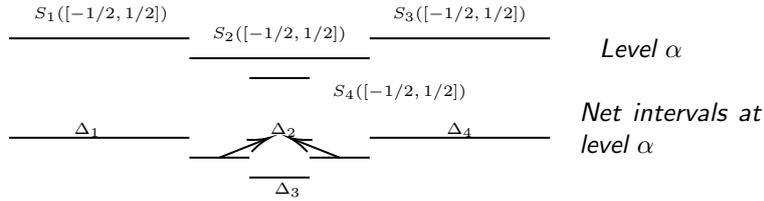

Although the net intervals as defined here and in \cite{hare/hare/rutar} differ, the difference is not significant. As we have mention when the IFS are equicontractive the two notions coincide. If the IFS is not equicontractve we have Theorem \ref{thm:FNC vs classical FNC}.

\end{ej}

We will now introduce how we will normalize the net intervals. We will require the following notation; Given $L\in \R^+$ and $e\in \R^d$, we define $f_L^e:\R^d\to \R^d$ with $f_L^e(x)=Lx+e$.  We fix the next normalization for net intervals.
\begin{defi}

    Given $\Delta\in\subset \F$, let $L\in \R^+$ be the largest real number with the properties that there is $e\in\R^d$ such that $f_L^e([0,1]^d)\subset \Delta$. Then the normalize size $m(\Delta)=L$. We note that the set of $e\in \R^d$ is a close subset of $\Delta$. We let $a$ be the smallest of such $e$(order $\R^d$ with lexicographical order). We set \begin{equation}
        T_\Delta(x)=f_L^a(x)=Lx+a
    \end{equation}

    We say $T(x)=Lx+a$ is a neighbor of $\Delta\in \F_\alpha$  if there exist $\sigma\in \Lambda_\alpha$ such that $S_\sigma(\K)\supset \Delta$ and $T=T_\Delta^{-1}\circ S_\sigma$. In this case, we also say that $S_\sigma$ generates the neighbor $T$. The \emph{neighbor set} of $\Delta$ is the maximal \begin{equation}
        V_\alpha(\Delta)=\{T_1,\dots,T_m\}.
    \end{equation}
    Here, each $T_i=T_\Delta^{-1}\circ S_{\sigma_i}$ is a distinct neighbor of $\Delta$. When the generation is clear, we write $V(\Delta)$.
\end{defi}
Note that $m(\Delta)$ is determined by the length of the side of the largest hypercube that is correctly oriented, which can fit inside of $\Delta$. 
\begin{defi}
    We say that an IFS satisfies the \emph{finite neighbor condition} (FNC) if there are only finitely many neighbor sets.
\end{defi}

In practice, we identify the net intervals with their neighbor set. In other words, in Example \ref{exampenetintervaldif} we would identify $\Delta_1$, $\Delta_4$ as the same net interval. This is because the key information is on the neighbor set. 

We note that many of this concepts we introduce here, have parallels to those introduced in \cite{hare/hare/rutar}. To differentiate them, we will add the word \emph{classical} to the concepts from \cite{hare/hare/rutar}. In other words, classical net intervals are net intervals as introduced in \cite{hare/hare/rutar}. We also recall that in \cite{hare/hare/rutar} the work was done for IFS in $\R$. Hence, any concept that has the word classical is for IFS in $\R$.
\section{Characterization of the different overlap conditions}\label{SCharact}
In this section, we give some characterization and properties of the overlaps conditions. We start with the FNC. Although we do not get an equivalence between the FNC and the GFTC\textsubscript{co}, we show that the FNC implies the GFTC\textsubscript{co}. We compare the FNC we propose to the one given in \cite{hare/hare/rutar}, we show the two notions are equivalent in $\R$. We give characterizations of the GFTC when the IFS is equicocantrive, and we close this section by characterizing the WSC in $\R^d$, this is the same known characterization that was obtain in $\R$ in \cite{hare/hare/rutar}.
\begin{thm}\label{thm3.4}
    If an IFS satisfies the FNC, then it satisfies the GFTC\textsubscript{co}.
\end{thm}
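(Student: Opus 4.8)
The plan is to prove the theorem directly from the definition of GFTC\textsubscript{co}, by showing that $\E_\cS(\V)$ is finite for the open set $\V=\inte(\operatorname{hull}(K))$. First I would check that $\V$ is a nonempty bounded invariant open set: from $S_i(K)\subset K$ and the fact that $S_i$ is an affine homeomorphism we get $S_i(\K)=\operatorname{cl}(\operatorname{hull}(S_i(K)))\subset\operatorname{cl}(\operatorname{hull}(K))=\K$, hence $S_i(\V)=\inte(S_i(\K))\subset\V$. Now fix an element $g=S_\sigma^{-1}\circ S_\tau\in\E_\cS(\V)$ arising from some $\alpha>0$ with $\sigma,\tau\in\Lambda_\alpha$ and $S_\sigma(\V)\cap S_\tau(\V)\neq\emptyset$. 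The central algebraic observation is that the normalization cancels: if there is a net interval $\Delta\in\F_\alpha$ with $\Delta\subset S_\sigma(\K)$ and $\Delta\subset S_\tau(\K)$, then $S_\sigma$ and $S_\tau$ both generate neighbors of $\Delta$, namely $T_i=T_\Delta^{-1}\circ S_\sigma$ and $T_j=T_\Delta^{-1}\circ S_\tau$, both elements of $V_\alpha(\Delta)$, and
\[
g=S_\sigma^{-1}\circ S_\tau=(T_\Delta^{-1}\circ S_\sigma)^{-1}\circ(T_\Delta^{-1}\circ S_\tau)=T_i^{-1}\circ T_j.
\]

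Granting such a common net interval for every overlapping pair, the conclusion follows quickly from the FNC. Since there are only finitely many neighbor sets, the union $\mathcal{T}$ of all similarities occurring in some $V_\alpha(\Delta)$ is a finite set; as $T_i,T_j\in V_\alpha(\Delta)\subset\mathcal{T}$, every $g$ obtained above lies in the finite set $\{T^{-1}\circ T':T,T'\in\mathcal{T}\}$. Therefore $\E_\cS(\V)$ is finite, which is exactly GFTC\textsubscript{co}. Note that this step also pins down the scales: for a generator $S_\sigma$ of a neighbor of $\Delta$ the ratio $r_\sigma/m(\Delta)$ is one of finitely many values, and since $r_\sigma\asymp\alpha$ for $\sigma\in\Lambda_\alpha$ we get $m(\Delta)\asymp\alpha$, so net intervals resolve $K$ at scale $\alpha$.

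The crux is thus the geometric step of producing, for each overlapping pair, a common net interval $\Delta\in\F_\alpha$ inside $S_\sigma(\K)\cap S_\tau(\K)$. Writing $R=S_\sigma(\V)\cap S_\tau(\V)=\inte(S_\sigma(\K))\cap\inte(S_\tau(\K))$, a nonempty open convex set, it suffices to find a point of $K$ in $R$: for any $x\in R\cap K$ lying off the finite union $\bigcup_{\rho\in\Lambda_\alpha}S_\rho(\partial\K)$, formula \eqref{eqninterval} gives $x\in\inte(\Delta_\alpha(x))$, $\inte(\Delta_\alpha(x))\cap K\neq\emptyset$, and $\Delta_\alpha(x)\subset S_\sigma(\overline{\V})\cap S_\tau(\overline{\V})=S_\sigma(\K)\cap S_\tau(\K)$, so $\Delta=\Delta_\alpha(x)$ works.

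I expect the reduction $R\cap K\neq\emptyset$ to be the main obstacle, and the exact point where the passage from $\R$ to $\R^d$ is genuinely harder. In $\R$ the set $R$ is an interval whose endpoints are images of $\pm 1/2$ under $S_\sigma$ or $S_\tau$; since $\operatorname{cl}(\operatorname{hull}(K))=\K$ forces the vertices of $\K$, and hence of each $S_\rho(\K)$, to lie in $K$, one can show that $R$ meets $K$. For $d\ge 2$, however, $R$ is a convex polytope whose vertices may be face intersections of $S_\sigma(\K)$ and $S_\tau(\K)$ that need not belong to $K$, and since $K$ is typically nowhere dense $R$ could a priori sit inside a \emph{hole} of $K$. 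To rule this out I would again exploit that all $2^d$ vertices of every $S_\rho(\K)$ lie in $K$, together with the covering $K\subset\bigcup_{\rho\in\Lambda_\alpha}S_\rho(\K)$ and the comparability $m(\Delta)\asymp\alpha$ above, to locate a point of $K$ in $R$; should $R\cap K=\emptyset$ ever occur, the remaining hole-overlaps must be controlled separately, and handling them is the delicate part on which the argument ultimately rests.
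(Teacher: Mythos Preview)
Your argument is essentially the same as the paper's: the paper also fixes $\sigma,\tau\in\Lambda_\alpha$ with $S_\sigma(\K)\cap S_\tau(\K)\neq\emptyset$, asserts there is a net interval $\Delta$ contained in the intersection so that both $S_\sigma$ and $S_\tau$ generate neighbors of $\Delta$, and then uses the identical cancellation
\[
S_\sigma^{-1}\circ S_\tau=(T_\Delta^{-1}\circ S_\sigma)^{-1}\circ(T_\Delta^{-1}\circ S_\tau)
\]
to conclude finiteness of $\E_\cS(\V)$ from the finiteness of the collection of neighbors. Your invariance check for $\V$ and the remark on $m(\Delta)\asymp\alpha$ are correct extras, but not needed here.

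Where you diverge from the paper is in how much weight you place on the geometric step. The paper simply writes ``Then there is a net interval $\Delta\in\F$ contained in $I$'' and moves on; you flag this as the crux and leave it unresolved, ending with ``handling them is the delicate part on which the argument ultimately rests.'' That makes your write-up incomplete as a proof. Your underlying concern---that $R=S_\sigma(\V)\cap S_\tau(\V)$ could miss $K$ when $K$ is nowhere dense---is not unreasonable in the generality you state it, and the paper offers no justification for this point either; it treats existence of a net interval inside $I$ as immediate under the standing hypothesis $\operatorname{cl}(\operatorname{hull}(K))=\K$. So you have correctly located where the paper is terse, but you have inflated a one-line assertion into the main difficulty. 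For the purposes of matching the paper, the intended proof is short: assert the net interval, apply the cancellation, done.
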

\begin{proof}
    Set,
    \[\E= \E_\cS(V)=\bigcup_{\alpha>0}\left\{S_\sigma^{-1}\circ S_\tau:\sigma,\tau\in\Lambda_\alpha,S_\sigma(V)\cap S_\tau(V)\neq\emptyset\right\}.\]
    Then what we want to prove is that the statement ``$\cS$ has finitely many neighbors sets implies $\E$ is finite''. 

    Suppose the IFS $\cS$ has finitely many neighbor sets. Let $\sigma, \tau\in \Lambda_\alpha$ be arbitrary such that$I:=S_\sigma(\K)\cap S_\tau(\K)\neq \emptyset$. Then there is a net interval $\Delta\in \F$ contained in $I$, so that $S_\sigma$ and $S_\tau$ generate neighbors of $\Delta$. In particular, $T_\Delta^{-1}\circ S_\sigma$ and $T_\Delta^{-1}S_\tau$ must be two of the finitely many neighbors.  Hence
    \[S_\sigma^{-1}\circ S_\tau=S_\sigma\circ T_\Delta\circ T_\Delta^{-1}\circ S_\tau=(T_\Delta^{-1}\circ S_\sigma)^{-1}\circ(T_\Delta^{-1}\circ S_\tau)\]
    can only take finitely many values. So $\E$ is finite.

\end{proof}
\begin{remark}
    Theorem \ref{thm3.4} becomes an equivalence when changing the FNC for the classical FNC, see \cite[Theorem~3.4]{hare/hare/rutar}. 
\end{remark}
\begin{thm}\label{thm:FNC vs classical FNC}
    An IFS in $\R$ satisfies the FNC if and only if it satisfies the classical FNC. 
\end{thm}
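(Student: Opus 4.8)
The plan is to set up a precise dictionary between the net intervals of Definition~\ref{netinterval} and the classical net intervals of \cite{hare/hare/rutar}, and then read off both implications from it. Fix a level $\alpha$ and a net interval $\Delta\in\F_\alpha$. First I would record that the covering data is constant on $\Delta$: if $x,y\in\Delta\setminus\bigcup_{\sigma\in\Lambda_\alpha}S_\sigma(\partial\K)$, then $\{T\in\sS_\alpha:x\in T(V)\}=\{T\in\sS_\alpha:y\in T(V)\}$, which is immediate from the two clauses defining $\Delta_\alpha(x)$ in \eqref{eqninterval}. Using this, I would show that in $\R$ each connected component of $\Delta$ is exactly one classical net interval at level $\alpha$ carrying the same covering maps: no endpoint of any $S_\mu(\overline V)$, $\mu\in\Lambda_\alpha$, can lie in the interior of a component, since a covering map contains the whole component while the open image of a non-covering map is deleted from $\Delta$. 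Hence the component is not subdivided by the classical construction, and a map covers the component if and only if it covers $\Delta$.

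The decisive point is the comparison of normalizations. In $\R$ the set $f_L^e([0,1])=[e,e+L]$ fits inside $\Delta$ precisely when it fits inside a single component, so $m(\Delta)=L$ is the length of the longest component and $T_\Delta$ is the affine map sending $[0,1]$ onto the leftmost longest component $\Delta^{(0)}$; but this is exactly the classical normalization $T^{\mathrm{cl}}_{\Delta^{(0)}}$ of $\Delta^{(0)}$. Since $\Delta^{(0)}$ has the same covering maps as $\Delta$, I obtain
\begin{equation*}
V_\alpha(\Delta)=\{T_\Delta^{-1}\circ S_\sigma:S_\sigma(\overline V)\supseteq\Delta\}=\{(T^{\mathrm{cl}}_{\Delta^{(0)}})^{-1}\circ S_\sigma:S_\sigma(\overline V)\supseteq\Delta^{(0)}\},
\end{equation*}
that is, the neighbor set of $\Delta$ coincides with the classical neighbor set of $\Delta^{(0)}$. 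This settles ``classical FNC $\Rightarrow$ FNC'' at once: the collection of neighbor sets is a subcollection of the classical neighbor sets, so if the latter is finite so is the former. More generally, for an arbitrary component $\Delta^{(i)}$ one has $(T^{\mathrm{cl}}_{\Delta^{(i)}})^{-1}\circ S_\sigma=g_i\circ(T_\Delta^{-1}\circ S_\sigma)$ with $g_i=(T^{\mathrm{cl}}_{\Delta^{(i)}})^{-1}\circ T_\Delta$ a fixed affine map, so the classical neighbor set of $\Delta^{(i)}$ equals $g_i\cdot V_\alpha(\Delta)$.

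For the converse ``FNC $\Rightarrow$ classical FNC'' the extra maps $g_i$ must be controlled, and here I would route through the generalized finite-type condition rather than argue directly: by Theorem~\ref{thm3.4} the FNC implies the GFTC\textsubscript{co}, and by the equivalence recorded in the Remark after Theorem~\ref{thm3.4} (namely \cite[Theorem~3.4]{hare/hare/rutar}) the GFTC\textsubscript{co} is equivalent to the classical FNC in $\R$; chaining these yields the classical FNC. I expect the main obstacle to be the two technical points underlying the dictionary: verifying that the normalizing component $\Delta^{(0)}$ is a bona fide classical net interval (in particular that its interior meets $K$, rather than being a spurious gap between deleted images), and checking that the paper's normalization convention and the classical one genuinely agree on it. The reason the forward direction resists an elementary treatment is precisely that $V_\alpha(\Delta)$ records only the covering maps and is blind to the holes created by the non-covering maps, so the relative sizes encoded in the $g_i$ are invisible to it; the finiteness of those holes is exactly the information supplied by the GFTC\textsubscript{co}.
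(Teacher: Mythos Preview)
Your approach matches the paper's: for FNC $\Rightarrow$ classical FNC you route through Theorem~\ref{thm3.4} and \cite[Theorem~3.4]{hare/hare/rutar} exactly as the paper does, and for the converse the paper simply asserts that ``our net intervals are at most a finite union of classical net intervals,'' which is precisely the component decomposition you spell out. Your identification of $T_\Delta$ with the classical normalization $T^{\mathrm{cl}}_{\Delta^{(0)}}$ of the leftmost longest component, and hence of $V_\alpha(\Delta)$ with a classical neighbor set, is the detail the paper suppresses but implicitly relies on.
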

\begin{proof}
    This follows from Theorem \ref{thm3.4}, \cite[Theorem~3.4]{hare/hare/rutar} for one implication. The other implication follows from the fact that our net intervals are at most a finite union of classical net intervals.
\end{proof}

\begin{lem}\label{gencharactgftc}
    Let $d_\ell$ be a metric on $\R^d$ and $\{S_i\}$ be an IFS with respect to $d_\ell$ such that $S_i(x)=r A_ix+d_i$. In addition, assume that the attractor $K\subset \operatorname{cl}(B_\ell(0,1/2))$, $0<r<1$, and $A_i$ generate a finite group. Then $\{S_i\}$ satisfies the GFTC\textsubscript{co} with $V=B_\ell(0,1/2)$ if and only if there exists a finite set $F$ such that for $|\sigma|=|\tau|=n$ either
    \begin{equation}
        r^{-n}d_\ell(S_\sigma(0),S_\tau(0))>1 \text{ or } r^{-n}(S_\sigma(0)-S_\tau(0))\in F.
    \end{equation}
\end{lem}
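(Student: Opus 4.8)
The plan is to unwind both sides of the claimed equivalence into a single statement about the set of normalized difference vectors $r^{-n}(S_\sigma(0)-S_\tau(0))$; once that is done the two conditions become almost identical, and the only genuine work is to reconcile a strict inequality with a non-strict one.

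\emph{Reduction to difference vectors.} First I would use equicontractivity: for $r^n<\alpha\le r^{n-1}$ the set $\Lambda_\alpha$ is exactly the words of length $n$. Writing $B_\sigma=A_{\sigma_1}\cdots A_{\sigma_n}$ and $c_\sigma=S_\sigma(0)$, one has $S_\sigma(x)=r^nB_\sigma x+c_\sigma$, so a direct computation gives
\[ S_\sigma^{-1}\circ S_\tau(x)=B_\sigma^{-1}B_\tau\,x+r^{-n}B_\sigma^{-1}(c_\tau-c_\sigma). \]
Every linear part $B_\sigma^{-1}B_\tau$ lies in the group $G=\langle A_i\rangle$, which is finite by hypothesis. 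Hence $\E_\cS(B_\ell(0,1/2))$ is finite iff the set of translation parts $r^{-n}B_\sigma^{-1}(c_\tau-c_\sigma)$ is finite, and since $G$ is finite and each $B_\sigma^{-1}\in G$, this holds iff the set of vectors $r^{-n}(c_\sigma-c_\tau)$ (over the same intersecting pairs) is finite.

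\emph{Geometric translation.} Because each $S_\sigma$ is a $d_\ell$-similarity of ratio $r^n$, we get $S_\sigma(B_\ell(0,1/2))=B_\ell(c_\sigma,r^n/2)$, and in the normed metric $d_\ell$ two open balls meet iff the distance between their centers is strictly less than the sum of the radii. Thus $S_\sigma(V)\cap S_\tau(V)\neq\emptyset$ iff $r^{-n}d_\ell(c_\sigma,c_\tau)<1$. Setting $\mathcal{N}_{<1}=\{r^{-n}(c_\sigma-c_\tau):|\sigma|=|\tau|=n,\ r^{-n}d_\ell(c_\sigma,c_\tau)<1\}$, and $\mathcal{N}_{\le1}$ for the analogue with $\le 1$, the previous paragraph shows that $\cS$ satisfies the GFTC\textsubscript{co} with $V=B_\ell(0,1/2)$ iff $\mathcal{N}_{<1}$ is finite, while the displayed condition (taking $F=\mathcal{N}_{\le1}$) says exactly that $\mathcal{N}_{\le1}$ is finite. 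So the lemma reduces to the equivalence $\mathcal{N}_{<1}\text{ finite}\iff\mathcal{N}_{\le1}\text{ finite}$.

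\emph{The two directions.} One implication is immediate from $\mathcal{N}_{<1}\subseteq\mathcal{N}_{\le1}$, which already proves that the condition implies the GFTC\textsubscript{co}. The reverse implication — that finitely many strict overlaps force finitely many tangencies $r^{-n}d_\ell(c_\sigma,c_\tau)=1$ — is the crux. Here I would use that $V=B_\ell(0,1/2)$, being the invariant open set witnessing the GFTC\textsubscript{co}, forces $\|d_i\|_\ell\le(1-r)/2$, together with the one-step recursion
\[ r^{-n}(c_\sigma-c_\tau)=r^{-1}\Big(r^{-(n-1)}(c_{\sigma^-}-c_{\tau^-})+B_{\sigma^-}d_{\sigma_n}-B_{\tau^-}d_{\tau_n}\Big), \]
in which the added vector has norm at most $2\max_i\|d_i\|_\ell\le 1-r$. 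The reverse triangle inequality then shows that the predecessor $r^{-(n-1)}(c_{\sigma^-}-c_{\tau^-})$ of any configuration of norm $\le1$ again has norm $\le1$; following predecessors down to the empty word (whose configuration is $0$) one reaches, after finitely many steps, a strictly overlapping ancestor lying in the finite set $\mathcal{N}_{<1}$.

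\emph{Closing the tangent case, and the main obstacle.} It remains to control the "tangent run" between a tangent configuration and its last strict ancestor. In the equality case of the reverse triangle inequality the added vector is pinned to the maximal norm $1-r$; for a strictly convex $d_\ell$ this forces the one-step relation to collapse to $r^{-n}(c_\sigma-c_\tau)=r^{-(n-1)}(c_{\sigma^-}-c_{\tau^-})$, so the run is constant and every tangent configuration is a single one-step image of an element of $\mathcal{N}_{<1}$; since there are only finitely many choices of $B_{\sigma^-},B_{\tau^-}\in G$ and of the final letters, $\mathcal{N}_{\le1}$ is finite. I expect the genuinely delicate case to be a non-strictly convex norm with $\|d_i\|_\ell=(1-r)/2$ — precisely the hypercube/$\ell^\infty$ situation that is the main application — where the reverse triangle equality only constrains the extremal coordinate and the tangent run need not be constant. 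There the argument must be supplemented by the polyhedral face structure of the unit ball of $d_\ell$ together with the finiteness of $G$ to conclude that the reachable tangent configurations still form a finite set; this is the step I would expect to require the most care.
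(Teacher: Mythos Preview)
Your reduction is exactly the paper's: compute $S_\sigma^{-1}\circ S_\tau(x)=(\overline{A}_n)^{-1}\overline{B}_n\,x+(\overline{A}_n)^{-1}r^{-n}(S_\tau(0)-S_\sigma(0))$ and use finiteness of $G=\langle A_i\rangle$ to conclude that $\E_\cS(V)$ is finite iff the set of normalized centre-differences arising from overlapping pairs is finite. Up to this point you and the paper agree line for line.

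Where you diverge is in taking the strict/non-strict threshold seriously. You correctly observe that open-ball overlap corresponds to $r^{-n}d_\ell(c_\sigma,c_\tau)<1$, whereas the lemma's dichotomy is phrased with $>1$, so the $\Rightarrow$ direction formally requires $\mathcal N_{<1}$ finite $\Rightarrow$ $\mathcal N_{\le 1}$ finite. The paper's proof does \emph{not} engage with this: it shows overlap $\Rightarrow$ normalized distance $\le 1$, writes the composition formula, and then asserts ``$F$ is finite if and only if there are finitely many $S_\sigma^{-1}\circ S_\tau$ with $S_\sigma(V)\cap S_\tau(V)\neq\emptyset$'' --- effectively reading the statement as if the threshold were $\ge 1$, so that both sets coincide with the overlap set. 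The tangency case is silently absorbed.

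Your one-step recursion and the bound $\|B_{\sigma^-}d_{\sigma_n}-B_{\tau^-}d_{\tau_n}\|_\ell\le 1-r$ (from invariance of $V$) are correct, and your strictly-convex argument that a tangent run is constant, hence every tangent configuration is a one-step image of a strict one, is clean and closes the gap in that case. Your candid admission that the polyhedral (e.g.\ $\ell^\infty$) case with $\|d_i\|_\ell=(1-r)/2$ needs an additional finiteness argument is accurate; this is not a defect relative to the paper, since the paper's own proof supplies nothing here either. If anything, you have identified and partially repaired a looseness in the original argument rather than introduced one.
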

Here $B_\ell(0,1/2)$ is the ball center at $0$ of radius $1/2$ with respect to $d_\ell$. 
\begin{proof}
       
    Note that any point $y\in S_\sigma(V)$ has the property that $d_\ell(S_\sigma(0),y)\leq \frac{r^n}{2}$, similar to $x\in S_\tau(V)$. Hence,
    \begin{align*}
        d_\ell(S_\sigma(0),S_\tau(0))&\leq r^n\\
        r^{-n}d_\ell(S_\sigma(0),S_\tau(0))&\leq 1.
    \end{align*}

    If $r^{-n}d_\ell(S_\sigma(0),S_\tau(0))>1$, then we have $S_\sigma(V)\cap S_\tau(V)=\emptyset$ so $S_\sigma^{-1}\circ S_\tau \notin N(S_\sigma)$.

    Assume $r^{-n}d_\ell(S_\sigma(0),S_\tau(0))\leq1$, and $S_\sigma(V)\cap S_\tau(V)\neq\emptyset$. Hence 
    \[S_\sigma^{-1}\circ S_\tau\in N(S_\sigma)\text{ and }r^{-n}(S_\sigma(0)-S_\tau(0))\in F.\]
    Note that $S_\sigma(x)=r^n\prod_i A_i x +\sum_i (\prod_j A_jd_ir^{i-1})$ and $S_\tau(x)=r^n\prod_i B_i x +\sum_i (\prod_j B_jb_ir^{i-1})$. Set $\overline{A}_j = \prod_{i=1}^j A_i$, and similarly $\overline{B}_j$. Hence
    \begin{align*}
        S_\sigma^{-1}&\circ S_\tau(x)=r^{-n}\left(\overline{A}_n\right)^{-1}\left(r^n\left(\overline{B}_n\right) x +\sum_i \left(\left(\overline{B}_i\right)b_ir^{i-1}\right) -\sum_i \left(\left(\overline{A}_i\right)d_ir^{i-1}\right)\right)\\
        &=\left(\overline{A}_n\right)^{-1}\left(\overline{B}_n\right) x +\left(\overline{A}_n\right)^{-1}r^{-n}\left(\sum_i \left(\left(\overline{B}_i\right)b_ir^{i-1}\right)-\sum_i \left(\left(\overline{A}_i\right)d_ir^{i-1}\right)\right)\\
        &=\left(\overline{A}_n\right)^{-1}\left(\overline{B}_n\right) x +\left(\overline{A}_n\right)^{-1}r^{-n}\left(S_\tau(0)-S_\sigma(0)\right).
    \end{align*}
    
Since the $A_i$ generate a finite group. Then $F$ is finite if and only if there are finitely many $S_\sigma^{-1}\circ S_\tau(x)$ such that $S_\sigma(V)\cap S_\tau(V)\neq \emptyset$.

\end{proof}
\begin{remark}
    It is not hard to come up with examples where $A_i$ do not generate a finite group and the IFS has the GFTC or even the OSC. In most of these cases, what is happening is that there is a lack of interaction between the different similarities involved.
\end{remark}
\begin{lem}\label{equivofnorms}
        Let $\cS=\{S_i\}_{i=0}^n$ be an IFS with respect to metrics $d_\ell$ and $d_\ell'$ on $\R^d$. If $\cS$ satisfies GFTC with respect to $d_\ell$ with the invariant set $V$, then it is GFTC with respect to $d_\ell'$ with the same open set $V$. Similarly for GFTC\textsubscript{co}.
\end{lem}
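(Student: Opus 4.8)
The plan is to reduce the statement to a single observation: the contraction ratios that govern the families $\Lambda_\alpha$ are intrinsic to the affine maps $S_i$ and do not depend on the metric used to measure them. Inspecting \eqref{eq2.2}, the metric enters the definition of $\E_\cS(V)$ \emph{only} through the index families $\Lambda_\alpha=\{\sigma:r_\sigma<\alpha\le r_{\sigma^-}\}$; the composed affine maps $S_\sigma^{-1}\circ S_\tau$ are metric-free objects, and the condition $S_\sigma(V)\cap S_\tau(V)\neq\emptyset$ is purely set-theoretic. So if I can show that each $S_i$ has the \emph{same} contraction ratio with respect to $d_\ell$ and to $d_\ell'$, then $\Lambda_\alpha$ and $\Lambda'_\alpha$ coincide for every $\alpha>0$, the two sets $\E_\cS(V)$ are literally equal, and GFTC transfers verbatim.

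First I would write $S_i(x)=M_ix+d_i$ in its metric-free affine form. Since $\cS$ is an IFS with respect to $d_\ell$ (induced, as throughout, by a norm), $S_i$ is a similarity of some ratio $r_i$, which means $M_i=r_iO_i$ with $O_i$ an isometry of the norm of $d_\ell$; likewise the hypothesis that $\cS$ is an IFS with respect to $d_\ell'$ gives $M_i=r_i'O_i'$ with $O_i'$ an isometry of the norm of $d_\ell'$. The crux is then that the isometry group of any norm on $\R^d$ is a compact subgroup of $GL_d(\R)$: it is closed, and it is bounded because all norms on $\R^d$ are equivalent. Consequently the continuous homomorphism $\det$ maps it into a compact subgroup of $(\R^\times,\cdot)$, which can only be contained in $\{\pm 1\}$, so $|\det O_i|=|\det O_i'|=1$. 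Taking absolute determinants in $M_i=r_iO_i=r_i'O_i'$ yields $|\det M_i|=r_i^{\,d}=(r_i')^{d}$, and since both ratios are positive, $r_i=r_i'$. (Equivalently, both numbers equal the spectral radius of $M_i$, because the eigenvalues of a norm isometry have modulus one.)

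Having established $r_i=r_i'$ for every $i$, I would conclude immediately: $r_\sigma=r'_\sigma$ for every word $\sigma$, hence $\Lambda_\alpha=\Lambda'_\alpha$ for all $\alpha>0$, hence the right-hand side of \eqref{eq2.2} is the identical subset of affine maps under either metric. Thus finiteness of $\E_\cS(V)$ for one metric is finiteness for the other, with the \emph{same} invariant open set $V$, which is exactly GFTC transferring. For the GFTC\textsubscript{co} case there is nothing extra to check, since $V=\inte(\operatorname{hull}(K))$ is determined by the attractor $K$, which is a metric-free object, so the identical $V$ serves for both metrics.

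The main obstacle is the step $r_i=r_i'$; once the contraction ratio is shown to be intrinsic, the rest is bookkeeping. The compactness/determinant argument settles it cleanly for norm-induced metrics, which is the only setting needed here (and the setting implicit in Lemma \ref{gencharactgftc}, where balls $B_\ell(0,1/2)$ and the form $rA_ix+d_i$ appear). If one wished to allow more general translation-invariant metrics, one would instead recover $r_i$ as a growth rate such as $\lim_n \operatorname{diam}_\ell\big(S_i^{\,n}(K)\big)^{1/n}$ and verify that this limit is metric-independent, but this generality is not required for the present applications.
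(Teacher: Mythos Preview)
Your argument is correct. The paper's own proof is a single sentence invoking the equivalence of all metrics on $\R^d$, with no further detail; your proposal unpacks exactly why that equivalence suffices, namely by pinning down that the only metric-dependent ingredient in \eqref{eq2.2} is the family $\Lambda_\alpha$, and then showing via the determinant/compactness argument that the contraction ratios $r_i$ are intrinsic to the affine maps. So the underlying idea is the same, but you have supplied the mechanism the paper leaves implicit: the paper's one-liner does not by itself explain how norm equivalence forces $\Lambda_\alpha=\Lambda'_\alpha$, and your observation that $|\det M_i|=r_i^{\,d}$ (because norm-isometry groups are compact in $GL_d(\R)$) is precisely the missing link. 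Your version is therefore more informative, at the cost of being longer; the paper's version is terse to the point of requiring the reader to reconstruct your argument.
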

\begin{proof}
    The proof is a direct consequence that the metrics $d_\ell$ and $d_\ell'$ are equivalent in $\R^d$.
\end{proof}

\begin{lem}\label{supnorm}
    Let $\{S_i\}$ be an IFS such that $S_i(x)=rA_i x+d_i$,with $0<r<1$ and attractor $K\subset \K$. Further, assume that the $A_i$ generate a finite group. Then $\{S_i\}$ satisfies the GFTC with $V=(-1/2,1/2)^d$ if and only if there exists a finite set $F$ such that for $|\sigma|=|\tau|=n$ either
    \begin{equation}
        r^{-n}\|S_\sigma(0)-S_\tau(0)\|_\infty>1 \text{ or } r^{-n}(S_\sigma(0)-S_\tau(0))\in F.
    \end{equation}
\end{lem}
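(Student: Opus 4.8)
The plan is to recognize this statement as the $\ell^\infty$ instance of Lemma \ref{gencharactgftc}. Indeed, the open cube $\V$ is exactly the open unit ball $B_\infty(0,1/2)$ of the supremum norm, and $\K=\operatorname{cl}(B_\infty(0,1/2))$, so the hypotheses $K\subset\K$, $0<r<1$ and ``$A_i$ generate a finite group'' are literally the hypotheses of Lemma \ref{gencharactgftc} for the metric $d_\ell(x,y)=\|x-y\|_\infty$. Under this identification $d_\ell(S_\sigma(0),S_\tau(0))=\|S_\sigma(0)-S_\tau(0)\|_\infty$, and the two displayed alternatives in the two lemmas coincide verbatim. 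So the first thing I would do is simply invoke Lemma \ref{gencharactgftc} with $d_\ell=\|\cdot\|_\infty$ and $V=B_\infty(0,1/2)=\V$.

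The only hypothesis of Lemma \ref{gencharactgftc} that is not immediate is that $\{S_i\}$ be an IFS \emph{with respect to} $d_\ell=\|\cdot\|_\infty$, i.e. that each $S_i$ be an $\ell^\infty$-similarity. Since $S_i(x)-S_i(y)=rA_i(x-y)$, this amounts to asking that every $A_i$ be a sup-norm isometry, that is, a signed permutation matrix. This is precisely guaranteed by the running hypothesis of the paper that the rotations $A_i$ fix the cube $\K$: the symmetries of $\K$ are exactly the signed permutations, and these preserve $\|\cdot\|_\infty$. In that (principal) case $S_\sigma(\V)=S_\sigma(0)+r^nA_\sigma\V$ is again an axis-parallel cube of side $r^n$, the computation in the proof of Lemma \ref{gencharactgftc} goes through unchanged, and the equivalence follows at once, the threshold $1$ being exactly the ratio between the side length $r^n$ and the normalization $r^n$.

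When one only assumes that the $A_i$ generate a finite group $G$ without fixing $\K$, the maps $S_i$ need not be $\ell^\infty$-similarities and Lemma \ref{gencharactgftc} cannot be applied directly to the cube. Here I would instead pass to the $G$-invariant norm $\|x\|_*=\max_{A\in G}\|Ax\|_\infty$; every $A_i$ is a $\|\cdot\|_*$-isometry, so $\{S_i\}$ is an IFS with respect to $d_*(x,y)=\|x-y\|_*$ and Lemma \ref{gencharactgftc} characterizes the GFTC for the balls $B_*(0,R)$. Because $\|\cdot\|_\infty\le\|\cdot\|_*\le\sqrt d\,\|\cdot\|_\infty$ one has $B_*(0,1/2)\subseteq\V\subseteq B_*(0,\sqrt d/2)$, and since the excess set $\E_\cS(V)$ is monotone in $V$, the GFTC for the cube is squeezed between the GFTC for these two $\|\cdot\|_*$-balls. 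Lemma \ref{equivofnorms} then removes the dependence on the chosen metric, and the equivalence of $\|\cdot\|_*$ with $\|\cdot\|_\infty$ converts the resulting condition into the stated one, turning the constant $\sqrt d$ back into $1$ and enlarging $F$ only by a bounded factor.

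The hard part will be this squeezing step: one must know that the finiteness of the normalized difference set $\{\,r^{-n}(S_\sigma(0)-S_\tau(0)):\ \|\cdot\|\le c\,\}$ does not depend on the radius $c$, so that the two concentric $\|\cdot\|_*$-balls give the same answer. This is where the equicontractive self-similarity enters, through the level recursion
\[
r^{-(n+1)}\bigl(S_{\sigma a}(0)-S_{\tau b}(0)\bigr)=r^{-1}\Bigl(r^{-n}\bigl(S_\sigma(0)-S_\tau(0)\bigr)+\overline A_n d_a-\overline B_n d_b\Bigr),
\]
whose $r^{-1}>1$ expansion forces any accumulation of these normalized differences at one scale to be reproduced at every scale; hence local finiteness in a single ball already yields it in every bounded region. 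I would expect this radius-insensitivity to be the only genuinely nontrivial ingredient, and I note that it is vacuous in the principal case where the $A_i$ fix $\K$, since there the two balls both equal $\V$ and the direct argument of the second paragraph applies.
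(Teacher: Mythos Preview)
Your approach is the paper's: invoke Lemma~\ref{gencharactgftc} with the sup norm and cite Lemma~\ref{equivofnorms}. Your first two paragraphs carry this out correctly and with more care than the paper's one-line proof. In particular, you correctly observe that applying Lemma~\ref{gencharactgftc} with $d_\ell=\|\cdot\|_\infty$ requires each $A_i$ to be an $\ell^\infty$-isometry, i.e.\ a signed permutation, which is exactly the condition that $A_i$ fix $\K$. Under that hypothesis your argument is complete and coincides with the paper's.

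Your paragraphs~3--4 go beyond what the paper actually proves. You are right that if the $A_i$ merely generate a finite subgroup of $O(d)$ without fixing $\K$, then $S_\sigma(\V)$ is a rotated cube and Lemma~\ref{gencharactgftc} does not apply directly with $d_\ell=\|\cdot\|_\infty$; the paper's proof does not address this either. In practice this does not matter: the only applications in the paper are the Corollary immediately following (where $A_i=I$) and the results of Section~\ref{SEquiv}, all of which work under the standing assumption that the $A_i$ fix $\K$. So your extended discussion is probing a gap between the lemma's stated generality and its actual use, not a gap in the paper's logical flow.

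That said, your sketch for the general case is not yet a proof. The monotonicity $\E_\cS(V)\subset\E_\cS(V')$ for $V\subset V'$ gives only one inclusion, so finiteness for the outer ball does not automatically transfer to the inner one in the direction you need; and Lemma~\ref{equivofnorms}, as stated, keeps $V$ fixed and so cannot move you between $B_*(0,1/2)$ and $B_*(0,\sqrt d/2)$. Your ``radius-insensitivity'' recursion in paragraph~4 is the right idea---one must show that finiteness of $\{r^{-n}(S_\sigma(0)-S_\tau(0)):\|\cdot\|\le c\}$ is independent of $c$---but you would still need to argue that an accumulation at one scale propagates downward, which requires more than the single displayed identity. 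If you want to close this, the cleanest route is to restrict the hypothesis to ``$A_i$ fix $\K$'' (matching how the lemma is used), at which point your second paragraph is the entire proof.
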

\begin{proof}
The proof is a consequence of Lemma \ref{gencharactgftc} and Lemma \ref{equivofnorms}.
\end{proof}
The next corollary is a direct application of Lemma \ref{supnorm}, but since it is a useful case, we state it none the less.
\begin{cor}
        Let $\{S_i\}$ be an IFS such that $S_i(x)=r x+d_i$, with $0<r<1$ and attractor $K\subset \K$. Then $\{S_i\}$ satisfies the GFTC with $V=(-1/2,1/2)^d$ if and only if there exists a finite set $F$ such that for $|\sigma|=|\tau|=n$ either
    \begin{equation}
        r^{-n}\|S_\sigma(0)-S_\tau(0)\|_\infty>1 \text{ or } r^{-n}(S_\sigma(0)-S_\tau(0))\in F.
    \end{equation}
\end{cor}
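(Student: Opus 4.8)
The plan is to recognize this statement as the special case of Lemma \ref{supnorm} in which every rotational part is trivial. Writing the given maps $S_i(x) = rx + d_i$ in the form $S_i(x) = rA_ix + d_i$ forces $A_i = I$, the $d\times d$ identity matrix, for every $i$. The identity is an orthogonal matrix, so the $S_i$ are genuine similarities of exactly the type required by Lemma \ref{supnorm}, with common contraction ratio $0<r<1$; the attractor hypothesis $K \subset \K$ is assumed outright in the corollary, matching the hypothesis of the lemma.

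Next I would verify the only substantive hypothesis of Lemma \ref{supnorm}, namely that the orthogonal parts $A_i$ generate a finite group. Since $A_i = I$ for all $i$, the group they generate is the trivial group $\{I\}$, which is finite. With every hypothesis of Lemma \ref{supnorm} thereby verified, its conclusion transfers verbatim: $\{S_i\}$ satisfies the GFTC with $V = \V$ if and only if there is a finite set $F$ such that, for all words with $|\sigma| = |\tau| = n$, either $r^{-n}\|S_\sigma(0) - S_\tau(0)\|_\infty > 1$ or $r^{-n}(S_\sigma(0) - S_\tau(0)) \in F$. This is precisely the asserted equivalence, so no further argument is needed.

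I expect essentially no obstacle here, since the entire content of the corollary is already contained in Lemma \ref{supnorm}; the only point to confirm is the harmless observation that the trivial group is finite. The reason for isolating this case is practical rather than logical: pure homotheties $S_i(x) = rx + d_i$ occur frequently in examples, and in that setting the finite-group condition is automatic, so the GFTC criterion collapses to a clean statement involving only the translation data $\{S_\sigma(0)\}$ and the metric $\|\cdot\|_\infty$.
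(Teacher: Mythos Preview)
Your proposal is correct and matches the paper's own treatment: the paper states explicitly that this corollary is a direct application of Lemma \ref{supnorm} and gives no further proof. Specializing to $A_i = I$ so that the generated group is trivial (hence finite) is exactly the intended one-line argument.
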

\begin{prop}\label{prop2.8}
    An IFS has the weak separation condition if and only if 
    \begin{equation}
        \sup_{\Delta\in \F}\# V(\Delta)<\infty.
    \end{equation}
\end{prop}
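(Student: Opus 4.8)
The plan is to prove both implications by translating between the neighbor count $\#V(\Delta)$ and the \emph{overlap multiplicity} of the cylinders $\{S_\sigma(\K):\sigma\in\Lambda_\alpha\}$. The basic dictionary is that, by construction of the neighbor set, $\#V(\Delta)$ equals the number of \emph{distinct} maps $S_\sigma$ with $\sigma\in\Lambda_\alpha$ and $S_\sigma(\K)\supseteq\Delta$ (since $T_\Delta$ is fixed, distinct neighbors $T_\Delta^{-1}\circ S_\sigma$ correspond to distinct $S_\sigma$). In other words, a neighbor of $\Delta$ is precisely a generation-$\alpha$ cylinder that covers $\Delta$. Throughout I would invoke the Observation after Definition \ref{def 2.2} allowing the balls to be replaced by hypercubes on a grid, so that the relevant ``windows'' are comparable in size and their number meeting a fixed region is controlled by side lengths alone.

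For the forward implication, assume the WSC and fix $\Delta\in\F_\alpha$; choose $w\in\inte(\Delta)\cap K$, which is nonempty by Definition \ref{netinterval}. If $S_\sigma$ generates a neighbor of $\Delta$, then $w\in\Delta\subseteq S_\sigma(\K)$, and since $S_\sigma(\K)$ is a hypercube of side $r_\sigma<\alpha$ centred at $S_\sigma(0)$, we get $\|S_\sigma(0)-w\|\le\tfrac{\sqrt d}{2}\alpha$. Hence all points $S_\sigma(0)$ attached to neighbors of $\Delta$ lie in one ball of radius $\tfrac{\sqrt d}{2}\alpha$ about $w$. Covering that ball by a number of radius-$\alpha$ balls depending only on $d$ and applying the WSC with reference point $x_0=0$ and $\tau=\epsilon$, the number of distinct such points is at most a constant $N'$. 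The remaining step is to pass from distinct points to distinct maps: this is where the equivalence between the point formulation of the WSC in Definition \ref{def 2.2} and its map (``bounded overlap'') formulation is used, see \cite{weakseparationequivalence}; granting this, we conclude $\#V(\Delta)\le N'$ uniformly in $\Delta$.

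For the converse I would argue by contraposition, negating Definition \ref{def 2.2} directly. If the WSC fails, then for every $N$ there are a level $\alpha$, a word $\tau$, and a ball $\overline{B}(z,\alpha)$ containing more than $N$ distinct points $S_\sigma(S_\tau(x_0))$ with $\sigma\in\Lambda_\alpha$. Distinct points come from distinct maps $S_\sigma$, each cylinder $S_\sigma(\K)$ meets $\overline{B}(z,\alpha)$ and has side at least $r_{\min}\alpha$, so the enlarged window $\overline{B}(z,C\alpha)$ with $C=C(d)$ contains more than $N$ distinct cylinders. A volume (pigeonhole) estimate then forces a point covered by at least $N\,(r_{\min}/2C)^d$ of them; choosing it generic and in $K$, so that it lies in $S_\sigma(\V)$ for each covering $\sigma$, Definition \ref{netinterval} gives $\Delta_\alpha(p)\subseteq S_\sigma(\K)$ for all these $\sigma$, whence each covering cylinder yields a distinct neighbor and $\#V(\Delta_\alpha(p))> N\,(r_{\min}/2C)^d$. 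Letting $N\to\infty$ contradicts boundedness of the neighbor counts.

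The hard part will be the final step of the converse: guaranteeing that the point of high overlap multiplicity actually lies in $K$ and in the interior of a net interval, rather than in a region of $\K$ disjoint from the attractor. This is exactly where the full-support hypothesis $\operatorname{cl}(\operatorname{hull}(K))=\K$ should enter, forcing the heavily overlapped sub-hypercubes $S_\sigma(\K)$ to meet $K$ in a common piece; I expect to make this quantitative by running the volume estimate against the self-similar structure of $K$ inside the window rather than against Lebesgue measure on $\K$. The other delicate point, confined to the forward direction, is the bookkeeping between counting distinct points $S_\sigma(S_\tau(x_0))$ and counting distinct maps $S_\sigma$ (the rotation/ratio ambiguity); I would dispatch it by appealing to the list of equivalent formulations of the WSC in \cite{weakseparationequivalence}, which is precisely designed to bridge these two viewpoints.
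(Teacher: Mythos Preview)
Your approach is broadly the same as the paper's—point-counting for the forward direction and a volume/multiplicity estimate for the converse—but the execution differs in one notable place.

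For the forward implication the paper avoids your ``distinct points versus distinct maps'' bookkeeping entirely, and does not cite \cite{weakseparationequivalence}. Instead of using the single reference point $0$, it applies the WSC simultaneously at each of the $2^d$ vertices $v_1,\dots,v_{2^d}$ of $\K$ (with $\tau=\epsilon$), obtaining bounds $N_1,\dots,N_{2^d}$. Since $\Delta$ sits inside a cube of side $\alpha$, the set $E=\{S_\sigma(v_i):T_\Delta^{-1}\circ S_\sigma\in V(\Delta)\}$ has $\#E\le 3^d\sum_iN_i$. The point is that an affine similarity is determined by the images of the vertices, so distinct neighbors $S_\sigma$ give distinct subsets of $E$; hence $\#V(\Delta)\le\#\mathcal P(E)$. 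This is cruder numerically than what the map-form of the WSC would give, but it is entirely self-contained.

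For the converse the paper runs your pigeonhole argument directly rather than by contraposition: with $M=\sup_\Delta\#V(\Delta)$, if a cube $I$ of side $\alpha$ contains $N$ distinct points $S_\sigma(x_0)$ then the set $Y=\{S_\sigma:S_\sigma(x_0)\in I,\sigma\in\Lambda_\alpha\}$ has $\#Y\ge N$, so $\sum_{f\in Y}m(f(\K))\ge N\alpha r_{\min}$; on the other hand each point of the doubled cube $J$ lies in the interior of at most $M$ of the $f(\K)$, giving $\sum_{f\in Y}m(f(\K))\le M\,m(J)=2M\alpha$ and hence $N\le 2M/r_{\min}$. Your ``hard part''—ensuring the highly covered point lies in $K$ and yields a bona fide net interval—is not singled out in the paper: the multiplicity bound is asserted for \emph{every} $x\in J$, with the passage to a net interval taken as immediate under the paper's standing conventions. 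So you are likely over-engineering that step; you do not need to run the estimate against a self-similar measure.
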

\begin{proof}
    Find the bounds $N_1,\dots, N_{2^d}$ from the Definition \ref{def 2.2} for $x_i=v_1,\dots, v_{2^d}$ (the corners of the hypercube) and set $\tau=\epsilon$ as the empty word. Let $\Delta\in \F_\alpha$ be a net interval, set
    \[E=\{S_\sigma(v_i):T_\Delta^{-1}\circ S_\sigma\in V(\Delta)\}.\]
    Note that $\#V(\Delta)\leq\#\mathcal{P}(E)$ since each neighbor of $\Delta$ corresponds to a finite collection of points from $E$. Thus, it is enough to show that $E$ is finite. 

    Since $\Delta\subset S_\sigma([-1/2,1/2]^d)$ for some $\sigma$, there is a hypercube $A_1$ with side $\alpha$ such that $\Delta\subset A$. Consider $A_2,\dots, A_{3^d}$ the cubes that surround $A$. Then $\Delta\subset \cup_{i=1}^{3^d}A_i=:I$. Then $I$ contains at most $3^dN_i$ distinct points of the form $S_\sigma (v_i)$. Thus
    \[\#E\leq 3^d\left(\sum_i N_i\right).\]

    Conversely, suppose that
    \[\sup_\Delta\#V(A)=M<\infty.\]
    Fix an arbitrary generation $\alpha$ and a closed hypercube $I$ with side $\alpha$. Fix $x_0\in [-1/2,1/2]^d$. Let $J$ be a closed hypercube such that it has side $2\alpha$ and with the same center and orientation as $I$. Assume $x=S_\sigma (x_0)\in I \subset J$ for $\sigma \in \Lambda_\alpha$. If $I$ contains $N$ distinct points of the form $S_\sigma(x_0)$ for $\sigma \in \Lambda_\alpha$, setting 
    \[Y=\{S_\sigma:S_\sigma (x_0)\in I,\sigma\in \Lambda_\alpha\}.\]
    We see that $\#Y\geq N$. Thus
    \begin{equation}\label{eq1prop2.2}
    \sum_{f\in Y}m(f([-1/2,1/2]^d))\geq N\alpha r_{\min}.
    \end{equation}
    Since $\sup_\Delta\#V(A)=M$, any point $x\in J$ can be contained in the interior of at most $M$ sets of the form $f([-1/2,1/2]^d)$ with $f\in Y$. Hence
    \begin{equation}\label{eq2prop2.2}
        \sum_{f\in Y}m(f([-1/2,1/2]^d))\leq M m(j)\leq M 2\alpha.
    \end{equation}
    Combining \eqref{eq1prop2.2} and \eqref{eq2prop2.2}, we have \[N\alpha r_{\min}\leq M2\alpha\text{ or }N\leq\frac{2M}{r_{\min}}.\]
\end{proof}
If there are finitely many neighbor sets, then $\sup_{\Delta\in \F}\#V(\Delta)<\infty$. Hence, by Proposition \ref{prop2.8} we have
\begin{cor}
    Any IFS satisfying the FNC has the WSC.
\end{cor}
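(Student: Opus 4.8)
The plan is to deduce the statement directly from Proposition \ref{prop2.8}, which already characterizes the WSC as the condition $\sup_{\Delta\in\F}\#V(\Delta)<\infty$. So all that remains is to show that the FNC forces this supremum to be finite, and the corollary follows immediately.

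First I would unpack the hypothesis. The FNC asserts that the collection $\{V(\Delta):\Delta\in\F\}$ of neighbor sets, taken up to equality of sets, is finite; say the distinct neighbor sets occurring are $W_1,\dots,W_p$. Each $W_j$ is by definition a finite list $\{T_1,\dots,T_m\}$ of distinct neighbors, hence has a well-defined finite cardinality $\#W_j$. Consequently the quantity $\#V(\Delta)$ ranges over the finite set of integers $\{\#W_1,\dots,\#W_p\}$ as $\Delta$ varies over $\F$, so that
\begin{equation*}
\sup_{\Delta\in\F}\#V(\Delta)\;\le\;\max_{1\le j\le p}\#W_j\;<\;\infty.
\end{equation*}
Then I would invoke Proposition \ref{prop2.8} to conclude that the IFS satisfies the WSC.

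I do not expect any genuine obstacle here: the entire analytic content has been absorbed into Proposition \ref{prop2.8}, and what remains is the elementary observation that a supremum of cardinalities taken over finitely many distinct finite sets is a maximum of finitely many finite numbers. The only point deserving a moment's care is the distinction between \emph{finitely many neighbor sets} (the FNC) and \emph{a uniform bound on the size of each neighbor set} (what Proposition \ref{prop2.8} needs); these coincide precisely because each individual neighbor set is finite by construction, so finitely many of them cannot produce arbitrarily large cardinalities. Thus the proof is a one-line deduction, exactly as indicated by the sentence preceding the statement.
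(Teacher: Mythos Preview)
Your proposal is correct and matches the paper's approach exactly: the paper states, in the sentence immediately preceding the corollary, that finitely many neighbor sets gives $\sup_{\Delta\in\F}\#V(\Delta)<\infty$, and then invokes Proposition~\ref{prop2.8}. Your write-up simply makes explicit the trivial observation that a supremum over finitely many finite cardinalities is finite.
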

\section{Weak Separation Condition implies Finite Neighbor Condition}\label{SEquiv}
In this section, the main result of this paper is presented. We start with two technical lemmas. We then proceed to prove our main result of Theorem \ref{thm4.4}. 

The proof of the equivalence is done in two steps. The first step is the lemma \ref{lem4.2}. In Lemma \ref{lem4.2} we show that for any $\delta>0$ there are finitely many overlaps that are greater than $\delta$. Step 2 is Theorem \ref{thm4.4}, where we show that the WSC implies that there is a lower bound on how small an overlap can be, which will allow us to bound $\E_\cS(V)$.

For the rest of this section we say that a matrix $A$ fixes $\K$ if, when considering the function $f(x)=Ax$, then $f(\K)=\K$.
\begin{lem}\label{lem4.1}
    \Assumptions.
    
Fix $\delta>0$. There exists a constant $C=C(\delta)$ such that for any $\alpha>0$ and $\sigma,\tau\in \Lambda_\alpha$ with $m(S_\sigma(\K)\cap S_\tau(\K))>\delta\alpha$, there is some word $\phi$ with $|\phi|\geq C$ and a choice of $\psi\in\{\sigma,\tau\}$ such that \[S_{\psi\phi}(\K)\subset S_\sigma(\K)\cap S_\tau(\K).\]
\end{lem}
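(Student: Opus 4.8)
The plan is to exploit the hypothesis that every $A_i$ fixes $\K$: since each $A_i$ is then a symmetry of the cube, the orthogonal part of any $S_\sigma$ also fixes $\K$, so $S_\sigma(\K)$ is an \emph{axis-aligned} cube of side $r_\sigma$ centred at $S_\sigma(0)$, for every word $\sigma$. Consequently $I:=S_\sigma(\K)\cap S_\tau(\K)$ is an axis-aligned box, and $m(I)$ is exactly the length of its shortest side. The idea is to normalise this overlap to a scale that does not depend on $\alpha$, locate a point sitting deep inside it, and then descend a bounded number of levels of the IFS to produce a cube $S_{\psi\phi}(\K)$ small enough to be swallowed by $I$.

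First I would reduce to a fixed scale. Choose $\psi=\sigma$ (the argument is symmetric in $\sigma$ and $\tau$, so either choice works). Applying the similarity $S_\psi^{-1}$, which has ratio $r_\psi^{-1}$ and again maps axis-aligned cubes to axis-aligned cubes because the orthogonal part of $S_\psi$ fixes $\K$, set $I':=S_\psi^{-1}(I)\subseteq\K$. Then $m(I')=m(I)/r_\psi$, and since $\psi\in\Lambda_\alpha$ gives $r_\psi<\alpha$, we get $m(I')=m(I)/r_\psi>\delta\alpha/r_\psi>\delta$. Thus $I'$ contains an axis-aligned sub-cube $Q$ of side $s>\delta$; letting $p$ be its centre, the sup-norm ball $B_\infty(p,s/2)=\{x:\|x-p\|_\infty<s/2\}$ satisfies $B_\infty(p,s/2)\subseteq Q\subseteq I'$.

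Next I would descend using the covering property of the attractor. For every $k$ we have $\K=\bigcup_{|\phi|=k}S_\phi(\K)$, so, writing $r_{\max}=\max_i r_i<1$, let $C=C(\delta)$ be the least integer with $r_{\max}^{C}<\delta/2$. Taking $k=C$, there is a word $\phi$ with $|\phi|=C$ and $p\in S_\phi(\K)$. Since $S_\phi(\K)$ is a cube of side $r_\phi\le r_{\max}^{C}<\delta/2<s/2$, every point of $S_\phi(\K)$ lies within sup-distance $<s/2$ of $p$, whence $S_\phi(\K)\subseteq B_\infty(p,s/2)\subseteq I'$. Applying $S_\psi$ gives $S_{\psi\phi}(\K)\subseteq S_\psi(I')=I$, as required. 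The constant $C$ depends only on $\delta$ and $r_{\max}$, not on $\alpha,\sigma,\tau$. Finally, $|\phi|$ may be taken arbitrarily large---in particular $\ge C$---because appending letters only shrinks the cube: $S_{\psi\phi\omega}(\K)\subseteq S_{\psi\phi}(\K)\subseteq I$ for every word $\omega$.

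The one place demanding care is the geometric heart of the argument, that a point of the overlap can be covered by an IFS-cube contained \emph{entirely} inside the overlap. This needs two uniform controls: that the level-$C$ cubes are genuinely small (quantified by $r_{\max}^{C}$), and that the renormalised overlap $I'$ has a point at sup-distance more than $\delta/2$ from its boundary (guaranteed by $m(I')>\delta$). It is precisely the normalisation by $S_\psi^{-1}$ that decouples $C$ from the scale $\alpha$, while the hypothesis that the $A_i$ fix $\K$ is what keeps all boxes axis-aligned, so that $m$ transforms simply (by the scaling factor $r_\psi^{-1}$) under $S_\psi^{-1}$.
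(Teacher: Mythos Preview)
Your proof is correct and takes a slightly different route from the paper's. The paper first arranges (by swapping $\sigma,\tau$ if necessary) that one vertex of $I$ equals $S_\psi(v)$ for some vertex $v$ of $\K$, then picks $\phi\in\Lambda_\delta$ with $v\in S_\phi(\K)$; since $S_{\psi\phi}(\K)$ is a cube of side $r_{\psi\phi}<\delta\alpha<m(I)$ sharing the corner $S_\psi(v)$ with $I$ and contained in $S_\psi(\K)$, it lies inside $I$. Your argument instead renormalises by $S_\sigma^{-1}$ and covers the \emph{centre} of a large inscribed cube, which sidesteps the corner analysis entirely and makes the choice of $\psi\in\{\sigma,\tau\}$ genuinely arbitrary. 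One practical difference: the paper's version yields directly $r_\phi\ge\delta\,r_{\min}^{2}=:C$, and it is this lower bound on $r_\phi$ (rather than on $|\phi|$) that is actually quoted in the subsequent lemma; your construction gives an \emph{upper} bound $|\phi|=C$ on the word length, from which one still extracts $r_\phi\ge r_{\min}^{C}$, so the downstream application goes through with a different constant.
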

\begin{proof}
    Let $I=S_\sigma(\K)\cap S_\tau(\K)$. We note that $I=[a_1,b_1]\times[a_2,b_2]\times\dots\times[a_d,b_d]$. Without loss of generality, we assume that one corner of $I$ is of the form $S_\sigma(v)$ where $v$ is a corner of $[-1/2,1/2]^d$. Put $\psi=\sigma$.
    
Let $C=C(\delta)=\delta(r_{\min})^2$. Let $\phi$ be such that $S_\phi\in\Lambda_\delta$ and $v\in S_\phi(\K)$. Note that $r_\phi\geq C$ and $S_{\psi\phi}(\K)$ is a hypercube with side less than $\delta\alpha$. Since $v\in S_{\psi\phi}(\K)\subset\K$, we conclude that \[S_{\psi\phi}(\K)\subset S_\sigma(\K)\cap S_\tau(\K).\]
\end{proof}
\begin{lem}\label{lem4.2}
   \Assumptions. If $\{S_i\}$  satisfies the weak separation condition, then for each $\delta>0$ there exists a finite set $\E_\delta$ such that for any generation $\alpha>0$ and $\sigma,\tau \in \Lambda_\alpha$ either
\[m(S_\sigma([-1/2,1/2]^d\cap S_\tau([-1/2,1/2]^d)<\delta\alpha\text{ or }S_\sigma^{-1}\circ S_\tau\in \E_\delta.\]
\end{lem}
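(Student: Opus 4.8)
The plan is to show the set
\[
\E_\delta=\bigl\{S_\sigma^{-1}\circ S_\tau:\ \sigma,\tau\in\Lambda_\alpha\text{ for some }\alpha>0,\ m(S_\sigma(\K)\cap S_\tau(\K))>\delta\alpha\bigr\}
\]
is finite. The first step is to confine $\E_\delta$ to a compact family of affine maps. Writing $S_\sigma(x)=r_\sigma\overline{A}_\sigma x+S_\sigma(0)$ with $\overline{A}_\sigma=A_{\sigma_1}\cdots A_{\sigma_{|\sigma|}}$ in the finite group $G=\langle A_0,\dots,A_n\rangle$ of symmetries of $\K$, each $g=S_\sigma^{-1}\circ S_\tau$ is the affine map $x\mapsto sBx+t$, where $B=\overline{A}_\sigma^{-1}\overline{A}_\tau\in G$, $s=r_\tau/r_\sigma$, and $t=g(0)$. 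Since $\sigma,\tau\in\Lambda_\alpha$ forces $r_\sigma,r_\tau\in[\alpha r_{\min},\alpha)$, we get $s\in(r_{\min},r_{\min}^{-1})$; and since $S_\sigma(\K)\cap S_\tau(\K)\neq\emptyset$ the centers satisfy $\|S_\sigma(0)-S_\tau(0)\|<\sqrt d\,\alpha$, whence $\|t\|<\sqrt d\,r_{\min}^{-1}$. As $G$ is finite, all such $g$ lie in a fixed compact set $\mathcal C$, so it suffices to prove that $\E_\delta$ has no accumulation point in $\mathcal C$.

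Next I would use Lemma~\ref{lem4.1} to anchor the maps. Because $\overline{A}_\sigma$ fixes $\K$, every $S_\sigma(\K)$ is an axis-parallel cube of side $r_\sigma$ centred at $S_\sigma(0)$, and $g(\K)$ is the axis-parallel cube of side $s$ centred at $t$. Lemma~\ref{lem4.1} supplies, for each large overlap, a word $\phi$ with $S_\phi\in\Lambda_\delta$ and a choice $\psi\in\{\sigma,\tau\}$ with $S_{\psi\phi}(\K)\subseteq S_\sigma(\K)\cap S_\tau(\K)$. As $\Lambda_\delta$ is a finite set of words and there are only two choices of $\psi$, if $\E_\delta$ had an accumulation point I could pass to an infinite subfamily on which $\phi$ and the role of $\psi$ — say $\psi=\sigma$ — are constant. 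Applying $S_\sigma^{-1}$ to $S_{\sigma\phi}(\K)\subseteq S_\tau(\K)$ then yields the uniform anchoring $S_\phi(\K)\subseteq g(\K)$: every map of the subfamily carries one fixed sub-cube $S_\phi(\K)$ into its image.

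The decisive step is to convert an accumulation of such anchored maps into a violation of the weak separation condition. Let $N$ be the bound furnished by Proposition~\ref{prop2.8} (equivalently, by Definition~\ref{def 2.2} applied at the corners of $\K$). Assuming the anchored subfamily accumulates, I would select $N+1$ distinct maps $g_0,\dots,g_N$ lying in an $\eta$-ball of $\mathcal C$, with $g_j=S_{\sigma_j}^{-1}\circ S_{\tau_j}$ at generation $\alpha_j$, and invoke the prepended-word form of the WSC (the Observation following Definition~\ref{def 2.2}) with the fixed word $\phi$ and a corner $x_0$. The ambient points $S_{\sigma_j}(S_\phi(x_0))=S_{\sigma_j\phi}(x_0)$ are corners of the anchored sub-cubes, and their normalised positions $g_j^{-1}(S_\phi(x_0))$ cluster to within $O(\eta)$ because the inverse maps $g_j^{-1}$ are uniformly close; feeding these near-coincident configurations into the scale-free WSC count should produce more than $N$ admissible points in a single ball, forcing $g_i=g_j$ for some $i\neq j$, a contradiction. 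Choosing $\eta$ small in terms of $\delta$, $r_{\min}$, $d$ and the minimal corner separation keeps the points genuinely distinct.

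The hard part is exactly this last transport. The maps $g_j$ are intrinsic, normalised objects attached to mutually different generations $\alpha_j$, while the WSC counts points at one fixed resolution; reconciling the two requires the scale-uniformity of the condition — the neighbour-set bound of Proposition~\ref{prop2.8} holds at every level with the same constant $N$ — together with the axis-parallel geometry forced by $G$ fixing $\K$ and the anchoring $S_\phi(\K)\subseteq g_j(\K)$. These reduce the estimate, coordinate by coordinate, to the one-dimensional separation underlying the analogous result in \cite{hare/hare/rutar}; making that reduction uniform in the generation $\alpha$, rather than merely bounded on each scale separately, is the crux of the lemma.
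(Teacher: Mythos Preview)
Your compactness strategy is a natural first attempt, but the proposal has a genuine gap precisely where you yourself flag it: the ``decisive step'' is never carried out. You have $N+1$ distinct maps $g_j=S_{\sigma_j}^{-1}\circ S_{\tau_j}$ with $\sigma_j,\tau_j\in\Lambda_{\alpha_j}$ for \emph{different} generations $\alpha_j$, clustered in your compact space $\mathcal C$. But the WSC (in any of its forms, including Proposition~\ref{prop2.8}) bounds the number of distinct points $S_\sigma(S_\phi(x_0))$ with $\sigma$ ranging over a \emph{single} $\Lambda_\alpha$. Closeness of the normalised maps $g_j$ does not by itself place the ambient points $S_{\sigma_j\phi}(x_0)$ into one ball at one common scale, nor does the anchoring $S_\phi(\K)\subseteq g_j(\K)$ supply such a transport. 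Your final paragraph effectively concedes this: you say the reduction to a single generation ``is the crux of the lemma,'' but you do not provide it. Appealing to the one-dimensional argument in \cite{hare/hare/rutar} does not help, because that paper resolves the same difficulty by the very mechanism you are missing here.

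The paper's proof sidesteps this obstacle entirely and does not argue by compactness or accumulation. Instead it fixes, once and for all, a net interval $\Delta_0\in\F_\beta$ with the \emph{maximal} number of neighbours (guaranteed finite by Proposition~\ref{prop2.8}), and builds the finite set $\E_\delta$ explicitly from $V(\Delta_0)$ and the finitely many words $\psi$ with $r_\psi\geq C r_{\min}^2$. Given any $\sigma,\tau\in\Lambda_\alpha$ with large overlap, Lemma~\ref{lem4.1} produces $\phi$ with $S_{\sigma\phi}(\K)\subseteq S_\sigma(\K)\cap S_\tau(\K)$; the key structural claim is that $\Delta_1:=S_{\sigma\phi}(\Delta_0)$ is itself a net interval at generation $\gamma=r_\sigma r_\phi\beta$ with $V(\Delta_1)=V(\Delta_0)$ --- this is forced by the maximality of $\#V(\Delta_0)$. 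Then $\sigma$ and $\tau$ each extend to words $\xi_1,\xi_2\in\Lambda_\gamma$ generating neighbours $T_1,T_2\in V(\Delta_0)$ of $\Delta_1$, and one computes directly that $S_\sigma^{-1}\circ S_\tau=(T_1\circ S_{\psi_1}^{-1})^{-1}\circ(T_2\circ S_{\psi_2}^{-1})$ with $r_{\psi_i}$ bounded below. The maximal-neighbour net interval is what lets one compare configurations at arbitrary generations against a single fixed template, which is exactly the uniform-in-$\alpha$ bridge your argument lacks.
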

\begin{proof}
 Fix $\delta>0$. Choose a net interval $\Delta_0$ with a maximum number of neighbors and assume $\Delta\in \F_\beta$, Proposition \ref{prop2.8} guarantees that this is possible.

 Note that since we have full support, there is a constant $C=C(\delta)$ as in Lemma \ref{lem4.1}. Define \[\Gamma=\{T\circ S_\psi^{-1}:\psi\in \Sigma^*, r_\psi\geq Cr_{\min}^2, T\in V(\Delta_0)\},\]
and put
\[\E_\delta=\{f^{-1}\circ g:f,g\in \Gamma\},\]
we see that $\E_\delta$ is finite since $\Gamma$ is finite. 

Let $\sigma,\tau \in \Lambda_\alpha$ be arbitrary with $m(S_\sigma([-1/2,1/2]^d)\cap S_\tau([-1/2,1/2]^d))\geq \delta \alpha$. Choose $\psi$ and $\phi$, such that $r_\phi>C$, $\psi\in \{\sigma,\tau\}$, and
\[S_{\psi\phi}([-1/2,1/2]^d)\subset S_\sigma([-1/2,1/2]^d)\cap S_\tau([-1/2,1/2]^d).\]
Without loss of generality, assume $\psi=\sigma$. Set 
\[\gamma=r_\sigma r_\phi \beta.\]

We claim that the interval $\Delta_1=S_{\sigma\phi}(\Delta_0)$ is a net interval of generation $\gamma$ with $V(\Delta_0)=V(\Delta_1)$.

To see the claim, let $\Delta_0$ have neighbors generated by $S_{\omega_1},\dots,S_{\omega_m}$ with $\omega_i\in \Lambda_\beta$. Hence $\{\sigma\phi\omega_i\}_{i=1}^m$ are words of generation $\Lambda_\gamma$. Note that $(\mathrm{int}(\Delta_1)\cap K)\neq\emptyset$. 

In particular, if $\Delta_1\notin \F_\gamma$, then there exist some $\tau\in \lambda_\gamma$ such that $S_\tau\notin\{S_{\sigma\phi\omega_i}\}_{i=1}^m$ and $S_\tau([-1/2,1/2]^d)\cap(\mathrm{int}(\Delta_1)\cap K)\neq\emptyset$. Then there exist $\Delta_2\in F_\gamma$ with $\Delta_2\subset \Delta_1\cap S_\tau([-1/2,1/2]^d)$ where $\Delta_2$ has neighbors generated by $\{\omega_1,\dots,\omega_m\}\cup\{\tau\}$ contradicting the maximality of $m$. Thus, $\Delta_1=\Delta_2$ and $\Delta_1\in F_\gamma$ with neighbors generated by $\sigma\phi\omega_i$. Note that $T_{\Delta_1}=r_{\sigma\phi}\cdot T_{\Delta_0}+d_{\sigma\phi}$, where $d_{\sigma\phi}$ refers to the constant of $S_{\sigma\phi}(x)=r_{\sigma\phi}A_{\sigma\phi}x+d_{\sigma\phi}$. Hence
\[V(\Delta_1)=\{T_{\Delta_1}^{-1}\circ S_{\sigma\phi\omega_i}\}_{i=1}^m=\{T_{\Delta_0}^{-1}\circ S_{\sigma \phi}^{-1}\circ S_{\omega_i}\}_{i=1}^m=V(\Delta_0).\]

Now we will show that $S_\sigma^{-1}\circ S_\tau\in \E_\delta$. Since $K=[-1/2,1/2]^d$ and $\Delta_1\in S_\sigma([-1/2,1/2]^d)\cap S_\tau([-1/2,1/2]^d)$, the words $\sigma$ and $\tau$ must be prefixes of $\xi_1,\xi_2\in \Lambda_\gamma$ that generate neighbors $T_1, T_2$ of $\Delta_1$, respectively. Let $\xi_1=\sigma \psi_1$ and $\xi_2=\tau\psi_2$. Since $\xi_1,\xi_2\in \Lambda_\gamma$ and $\sigma,\tau\in \Lambda_\alpha$ we have for each $i=1,2$ that  
\begin{equation}\label{eq4.2}
    r_{\psi_i}\geq \frac{\gamma}{\alpha}r_{\min}\geq \frac{\alpha r_\phi\beta}{\alpha}r_{\min}^2\geq C\beta r_{\min}^2.
\end{equation}
Since $T_{\Delta_1}^{-1}\circ S_{\xi_i}=T_i$ we have
\begin{align*}
    S_\sigma^{-1}\circ S_\tau&=S_{\psi_1}\circ (S_{\xi_1}^{-1}\circ S_{\xi_2})\circ S_{\psi_2}^{-1}\\
    &=S_{\psi_1}\circ (T_1^{-1}\circ T_{\Delta_1}^{-1}\circ T_{\Delta_1}\circ T_{2})\circ S_{\psi_2}^{-1}\\
    &=(T_1\circ S_{\psi_1}^{-1})^{-1}\circ  (T_{2}\circ S_{\psi_2}^{-1}).
\end{align*}
This is an element of $\E_\delta$ by \eqref{eq4.2}.
\end{proof}

\begin{thm}\label{thm4.4}
    \Assumptions. If $\cS$ satisfies the weak separation property and the self-similar set $K=[-1/2,1/2]^d$, then $\cS$ has the GFTC\textsubscript{co}.
\end{thm}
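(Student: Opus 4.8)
The plan is to reduce the statement to a uniform lower bound on the size of overlaps and then invoke Lemma~\ref{lem4.2}. Since $K=\K$, the interior of the convex hull of $K$ is exactly $V=\V$, and $S_i(\K)\subset\K$ forces $S_i(V)\subset V$, so $V$ is a bounded invariant open set and GFTC\textsubscript{co} is precisely the assertion that $\E_\cS(V)$ is finite. Every $g=S_\sigma^{-1}\circ S_\tau\in\E_\cS(V)$ comes from $\sigma,\tau\in\Lambda_\alpha$ with $S_\sigma(V)\cap S_\tau(V)\neq\emptyset$; since $V$ is the interior of $\K$, this is equivalent to $\inte(S_\sigma(\K)\cap S_\tau(\K))\neq\emptyset$, i.e. $m(S_\sigma(\K)\cap S_\tau(\K))>0$. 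Thus if we can produce a single $\delta>0$ with
\[
m(S_\sigma(\K)\cap S_\tau(\K))\geq\delta\alpha
\]
whenever $\sigma,\tau\in\Lambda_\alpha$ have nonempty overlap, then the first alternative of Lemma~\ref{lem4.2} is never triggered and we conclude $\E_\cS(V)\subseteq\E_\delta$, which is finite.

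First I would record the structural constraints on the elements of $\E_\cS(V)$. Writing $g=S_\sigma^{-1}\circ S_\tau$ with $\sigma,\tau\in\Lambda_\alpha$, the contraction ratio of $g$ is $r_\tau/r_\sigma\in[r_{\min},1/r_{\min}]$ (because $r_{\min}\alpha\leq r_\sigma,r_\tau<\alpha$), its orthogonal part is $A_\sigma^{-1}A_\tau$, which lies in the finite symmetry group of $\K$ (each $A_i$ fixes $\K$), and its translation part is $r_\sigma^{-1}A_\sigma^{-1}(S_\tau(0)-S_\sigma(0))$, whose norm is bounded by a constant since the overlap forces $\|S_\tau(0)-S_\sigma(0)\|\lesssim\alpha$. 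Hence $\E_\cS(V)$ is a precompact family of similarities, and the only way it can fail to be finite is for infinitely many distinct relative maps to accumulate.

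The heart of the argument is therefore the lower bound, which I would prove by contradiction. Assuming no such $\delta$ exists, choose $\sigma_n,\tau_n\in\Lambda_{\alpha_n}$ with nonempty overlap and $m(S_{\sigma_n}(\K)\cap S_{\tau_n}(\K))/\alpha_n\to 0$. After normalizing by $S_{\sigma_n}$, the maps $g_n=S_{\sigma_n}^{-1}\circ S_{\tau_n}$ satisfy $m(\K\cap g_n(\K))\to 0$, so $\K$ and $g_n(\K)$ become tangent; passing to a subsequence (using the finite rotation group together with the bounded ratios and translations from the previous step) the $g_n$ converge to a limit similarity $g$ with $g(\K)$ tangent to $\K$, and the distinct maps $g_n$ accumulate at $g$. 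Using full support, each genuine overlap $S_{\sigma_n}(\K)\cap S_{\tau_n}(\K)$ contains a sub-cube $S_{\sigma_n\rho_n}(\K)$ as in Lemma~\ref{lem4.1}, and I would use these common sub-cubes to convert the tangential accumulation of the $g_n$ into a family of pairs of distinct words whose images overlap substantially yet whose relative maps converge to the identity. This contradicts the weak separation condition in the form $\sup_{\Delta\in\F}\#V(\Delta)<\infty$ of Proposition~\ref{prop2.8} (equivalently, the discreteness of the overlapping relative maps; see \cite{weakseparationequivalence}).

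The step I expect to be the main obstacle is precisely this last promotion: turning an arbitrarily thin (tangential) overlap at scale $\alpha_n$ into arbitrarily many distinct relative maps that are simultaneously close to the identity and have non-negligible overlap. A thin overlap by itself does not cluster cube centers, so the contradiction cannot be read off at a single scale; one must combine the self-similar reproduction afforded by $K=\K$ with the exact repetition of neighbor sets established in the proof of Lemma~\ref{lem4.2} (where $V(\Delta_1)=V(\Delta_0)$) to force the relative maps — and in particular their contraction ratios, which is the delicate point since we do not assume the $r_i$ are logarithmically commensurate — to take only finitely many values. Once the lower bound $\delta$ is in hand, the reduction above closes the proof immediately.
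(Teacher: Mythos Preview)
Your reduction is correct and matches the paper: once a single $\delta>0$ exists with $m(S_\sigma(\K)\cap S_\tau(\K))\geq\delta\alpha$ for every overlapping pair $\sigma,\tau\in\Lambda_\alpha$, Lemma~\ref{lem4.2} gives $\E_\cS(V)\subseteq\E_\delta$ and we are done. The gap is in how you obtain that lower bound. Your compactness step yields $g_n=S_{\sigma_n}^{-1}\circ S_{\tau_n}\to g$ with $g(\K)$ tangent to $\K$, hence an accumulation point of $\E_\cS(V)$; but WSC does \emph{not} assert that $\E_\cS(V)$ is discrete --- that is precisely GFTC\textsubscript{co}, the conclusion you are after. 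The isolation statement in \cite{weakseparationequivalence} concerns only the \emph{identity}, and your limit $g$ is a nontrivial tangent map. Composing $g_n^{-1}\circ g_m\to id$ does not repair this, since $S_{\tau_n}^{-1}S_{\sigma_n}S_{\sigma_m}^{-1}S_{\tau_m}$ is generally not of the form $S_\omega^{-1}\circ S_\rho$ (the middle factor $S_{\sigma_n}S_{\sigma_m}^{-1}$ is not an iterate). Likewise Proposition~\ref{prop2.8} bounds $\#V(\Delta)$ at a \emph{single} scale, whereas your $g_n$ live at different scales $\alpha_n$, so their accumulation does not force any one neighbour set to be large. Your last paragraph acknowledges this promotion is the obstacle; the allusions to Lemma~\ref{lem4.1} and the $V(\Delta_1)=V(\Delta_0)$ device are not yet an argument.

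The paper avoids limits entirely. It sets $\delta=r_{\min}\min\{|(S_i(0))_j|:(S_i(0))_j\neq 0\}$, invokes Lemma~\ref{lem4.2} once to obtain the finite set $\E_\delta$, enlarges to $\cG=\{g^{-1}fh:f\in\E_\delta,\ g,h\in\{Id,S_1,\dots,S_k\}\}$, and chooses $\varepsilon$ below both the minimal nonempty overlap among words of length $\leq 1$ and $r_{\min}\min_{f\in\cG}m(\K\cap f(\K))$. The claim is then argued at the \emph{maximal} $\alpha$ admitting a violating pair $\sigma,\tau$: by the choice of $\delta$, deleting the last letter of $\sigma$ or $\tau$ shifts the face bounding the thin coordinate direction by at least $\delta\alpha$, so the parents $\sigma',\tau'$ already satisfy $m(S_{\sigma'}(\K)\cap S_{\tau'}(\K))\geq\delta\alpha$. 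Lemma~\ref{lem4.2} then places $S_{\sigma'}^{-1}\circ S_{\tau'}$ in $\E_\delta$, whence $S_\sigma^{-1}\circ S_\tau\in\cG$, forcing $m\geq r_\sigma\,\varepsilon_2\geq\varepsilon\alpha$ --- a contradiction. The WSC enters only through Lemma~\ref{lem4.2}; the uniform lower bound comes from a one-step descent in the word tree rather than any limiting procedure.
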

\begin{proof}
    Assume  $\cS=\{S_i\}_{i=1}^k$. Set 
    \[\delta=r_{\min}\min\{|0-S_i(0)|_j: 1\leq i\leq k, 0\neq S_i^j(0)\}>0.\]
    Let $\E_\delta$ be the corresponding finite set  as in Lemma \ref{lem4.2}. Put
    \[\cG=\{g^{-1}\circ f\circ h: f\in \E_\delta, g,h\in\{Id, S_1,\dots, S_k\}.\]
    Note that $\cG$ is a finite set. We may now define 
\begin{multline*}
     \varepsilon_1:=\min\{m(S_\phi([-1/2,1/2]^d\cap S_\psi([-1/2,1/2]^d):\\r_\phi,r_\psi\geq r_{\min}^2,m(S_\phi([-1/2,1/2]^d\cap S_\psi([-1/2,1/2]^d)\neq \emptyset\}
\end{multline*}
and
 \begin{multline*}
   \varepsilon_2:=\min\{m([-1/2,1/2]^d\cap f([-1/2,1/2]^d):\\ f\in \cG\text{ and }[-1/2,1/2]^d\cap f([-1/2,1/2]^d)\neq \emptyset\}  
 \end{multline*}   
    
    Fix \[0<\varepsilon\leq\min\{\varepsilon_1,r_{\min}\varepsilon_2\}\]
    Note that $\varepsilon\leq r_{\min}$.

    We claim that for any $\alpha>0$ and $\sigma,\tau\in \Lambda_\alpha$ with $S_\sigma([-1/2,1/2]^d)\cap S_\tau([-1/2,1/2]^d)\neq \emptyset$ we have, 
    \[m(S_\sigma([-1/2,1/2]^d)\cap S_\tau([-1/2,1/2]^d))\geq\varepsilon\alpha.\]

    Once the claim is verified, we are done since Lemma \ref{lem4.2} will imply $\E_{\cS}\subset \E_\varepsilon$.

    We will now prove the claim by contradiction. Assume the claim is false. Then there exists $0<\alpha\leq 1$ and $\sigma,\tau \in \Lambda_\alpha$ such that $S_\sigma([-1/2,1/2]^d)\cap S_\tau([-1/2,1/2]^d)\neq \emptyset$, and
    \begin{equation}\label{eq4.3}
        m(S_\sigma([-1/2,1/2]^d)\cap S_\tau([-1/2,1/2]^d))<\varepsilon\alpha.
    \end{equation}
    Choose $\alpha$ maximal with this property. Observe that the choice $\varepsilon\leq\varepsilon_1$ ensures that $\sigma$ and $\tau$ are both of length at least two. To see this assume that $\sigma$ had length at most $1$. Then  $\alpha\geq r_{\min}$. Consequently $r_\sigma, r_\tau\geq r_{\min}^2$ and thus 
    \[ m(S_\sigma([-1/2,1/2]^d)\cap S_\tau([-1/2,1/2]^d))\geq\varepsilon_1\geq\varepsilon\alpha.\]
    By \ref{eq4.3} this gives $\varepsilon\alpha > \varepsilon\alpha$, a contradiction. Hence $\sigma$ and $\tau$ both have length at least two.

    We let $\alpha'=\min\{r_{\sigma^-},r_{\tau^-}\}\geq \alpha$ and obtain prefixes $\sigma',\tau'$ of $\sigma$ and $\tau$ respectively, with $\sigma',\tau'\in \Lambda_{\alpha'}$. Note that $(\sigma',\tau')$ is one of $(\sigma^-,\tau),(\sigma,\tau^-)$ or $(\sigma^-,\tau^-)$.
    We first show that $ m(S_{\sigma'}([-1/2,1/2]^d)\cap S_{\tau'}([-1/2,1/2]^d))\geq \delta\alpha$. For notational simplicity we write
\begin{align*}
    S_\sigma([-1/2,1/2]^d)&=[a,b]^d=[a_1,b_1]\times[a_2,b_2]\times\dots\times[a_d,b_d],\\
     S_\tau([-1/2,1/2]^d)&=[c,e]^d=[c_1,e_1]\times[c_2,e_2]\times\dots\times[c_d,e_d],\\
      S_{\sigma'}([-1/2,1/2]^d)&=[a',b']^d=[a_1',b_1']\times[a_2,b_2]\times\dots\times[a_d',b_d'],\\
      S_{\tau'}([-1/2,1/2]^d)&=[c',e']^d=[c_1',e_1']\times[c_2',e_2']\times\dots\times[c_d',e_d'],\\
      I&=S_\sigma([-1/2,1/2]^d)\cap S_\tau([-1/2,1/2]^d).
\end{align*}
We can see an example of how to read this notation in Figure \ref{fig:Examplefornotation}. We note that the subindex indicates the direction, in other words a subindex $j$ represents the interval in direction $j$.
\begin{figure}[H]
    \centering

\tikzset{every picture/.style={line width=0.75pt}} %set default line width to 0.75pt        

\tikzset{every picture/.style={line width=0.75pt}} %set default line width to 0.75pt            

\tikzset{every picture/.style={line width=0.75pt}} %set default line width to 0.75pt        

\begin{tikzpicture}[x=0.7pt,y=0.7pt,yscale=-1,xscale=1]
%uncomment if require: \path (0,265); %set diagram left start at 0, and has height of 265

%Shape: Rectangle [id:dp9245207336914648] 
\draw   (151.33,38.87) -- (281.98,38.87) -- (281.98,167.28) -- (151.33,167.28) -- cycle ;
%Shape: Rectangle [id:dp8914003652928048] 
\draw   (229.35,103.92) -- (360,103.92) -- (360,232.33) -- (229.35,232.33) -- cycle ;

% Text Node
\draw (3,76.4) node [anchor=north west][inner sep=0.75pt]    {$ \begin{array}{l}
S_{\sigma }\left([ -1/2,1/2]^{2}\right)\\
=[ a_{1} ,b_{1}] \times [ a_{2} ,b_{2}]
\end{array}$};
% Text Node
\draw (381.33,181.4) node [anchor=north west][inner sep=0.75pt]    {$ \begin{array}{l}
S_{\tau }\left([ -1/2,1/2]^{2}\right)\\
=[ c_{1} ,e_{1}] \times [ c_{2} ,e_{2}]
\end{array}$};
% Text Node
\draw (95.33,178.4) node [anchor=north west][inner sep=0.75pt]    {$( a_{1} ,a_{2})$};
% Text Node
\draw (91.33,15.07) node [anchor=north west][inner sep=0.75pt]    {$( a_{1} ,b_{2})$};
% Text Node
\draw (178,82.73) node [anchor=north west][inner sep=0.75pt]    {$( c_{1} ,e_{2})$};
% Text Node
\draw (290.67,19.07) node [anchor=north west][inner sep=0.75pt]    {$( b_{1} ,b_{2})$};
% Text Node
\draw (247.33,125.73) node [anchor=north west][inner sep=0.75pt]    {$I$};
% Text Node
\draw (176,236.07) node [anchor=north west][inner sep=0.75pt]    {$( c_{1} ,c_{2})$};
% Text Node
\draw (283.98,170.68) node [anchor=north west][inner sep=0.75pt]    {$( b_{1} ,a_{2})$};
% Text Node
\draw (362,235.73) node [anchor=north west][inner sep=0.75pt]    {$( e_{1} ,c_{2})$};
% Text Node
\draw (362.67,82.07) node [anchor=north west][inner sep=0.75pt]    {$( e_{1} ,e_{2})$};

\end{tikzpicture}

    \caption{Example in $\R^2$ of notation in Theorem \ref{thm4.4}.}
    \label{fig:Examplefornotation}
\end{figure}
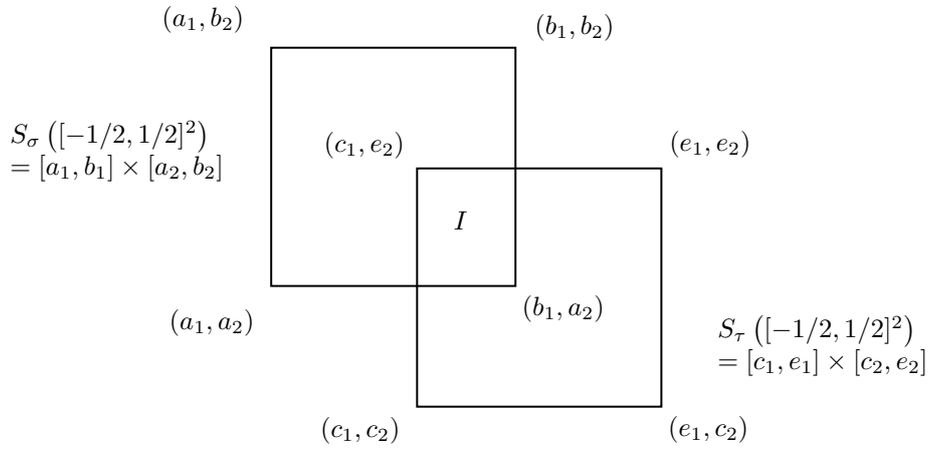
Since $\varepsilon<r_{\min}$, by swapping the roles of $\sigma$ and $\tau$ if necessary, we may assume that at least one coordinate of $I$ is of the form $[c_j,b_j]$. Otherwise (without loss of generality) $S_\sigma([-1/2,1/2]^d)\subset I$ which would imply \[m(I)\geq r_{\min}\alpha\geq \varepsilon \alpha.\]
Further assume that $b_j-c_j=m(I)$. Since $S_{\sigma'}([-1/2,1/2]^d\supset S_\sigma([-1/2,1/2]^d)$, we have $b_j'\geq b_j$ and $c_j'\leq c_j$. Moreover, by maximality of $\alpha$, we cannot have $ b_j=b_j'$ and $c_j=c_j'$.

If $b_j'>b_j$ then $\sigma'=\sigma^-$. This gives
\[b_j=(S_\sigma(u))_j\text{ and }b_j'=(S_{\sigma^-}(v))_j\]
with $u,v$ corners of the hypercube. Write $\sigma=\sigma^-i$, then \[b_j'-b_j=r_\sigma|u-s_i(u)|_j\]
with $|u-s_i(u)|_j\neq0$ since $b_j'\neq b_j$. By the definition of $\delta$ we have  $b_j'-b_j\geq \delta\alpha$ so that 
\[m(S_{\sigma'}([-1/2,1/2]^d)\cap S_{\tau'}([-1/2,1/2]^d))\geq \delta\alpha.\]
The case $c_j'<c_j$ follows similarly. Since $m(S_{\sigma'}([-1/2,1/2]^d)\cap S_{\tau'}([-1/2,1/2]^d))\geq \delta\alpha$ we have $S_{\sigma'}\circ S_{\tau'}\in \E_\delta$.

Since
\[S_\sigma^{-1}\circ S_\tau=g^{-1}\circ S_{\sigma'}^{-1}\circ S_{\tau'}\circ h\text{ with }g,h\in\{Id,S_1,\dots,S_l\}\text{ then } S_{\sigma}^{-1}\circ S_\tau\in \cG.\]
Therefore 
\[m(S_{\sigma}^{-1}\circ S_\tau([-1/2,1/2]^d)\geq\varepsilon_2,\]
and thus
\[m(S_{\sigma}([-1/2,1/2]^d)\cap S_\tau([-1/2,1/2]^d)\geq r_\sigma\varepsilon_2\geq r_{\sigma^-}r_{\min}\varepsilon_2\geq \varepsilon\alpha\]
which contradicts \eqref{eq4.3}.
\end{proof}

%\part{Local dimension of self-similar measures}
\section{Self-similar Measures}\label{SMeasures}

This section does not follow from the previous ones but is instead a practical application of how to calculate the local dimension of self-similar measures. With this objective, we look at what was done in \cite{HareHareMatthews}, where the authors developed an algorithm to calculate the local dimension of finite-type measures in $\R$. Here, we will show that the same kind of algorithm can be expanded to $\R^d$.

We present a set of technical assumptions that emulate those presented in \cite{HareHareMatthews}. We define transition matrices; these transition matrices will encode the weight of the net intervals. Using these transition matrices, one can calculate the local dimension of the self-similar measure at a point $x$.

Following what was done in \cite{HareHareMatthews}, we concentrate all the information on a directed graph. We show that this directed graph has an essential class. This allows us to conclude that we obtain the same kind of directed graph as those in \cite{HareHareMatthews}. This means that we obtain all the good properties that they show that this kind of graph has. To see all of these properties obtained from the graphs, we refer to \cite{HareHareMatthews}.

%We remark that in this section, we will work with IFS with a self-similar set $\K$. It is not hard to change this hypercube to any other, and the results follow in the same manner. 

%{\color{red}
%I'm not sure I like the fact that most of the paper uses $[-1/2, 1/2]^d$ and this 
%section uses $\K$.  I'm also not sure it is really needed either way.
%For many of these things, we can simply use $I = \mathrm{hull}(K)$ and $\partial I$ for the boundary.  It might be worth talking about this a bit.
%}

We recall that given an IFS $\cS=\{S_i\}_{i=0}^n$ and probabilities $p_0,\dots,p_n$ ($p_i>0$ and $\sum_ip_i=1$). Then there is a unique measure associated to $\cS$ and the probabilities, satisfying  
\[\mu(A)=\sum_{i=0}^n p_i\mu\circ S_i^{-1}(A).\]
  One of the properties of interest when studying self-similar measures is the local dimension. 
\begin{defi}
    Given a measure $\mu$, we define the \emph{upper local dimension} of $\mu$ at the point $x\in \supp \mu$ as
\[\udim \mu(x)=\limsup_{r\to0}\frac{\ln(\mu(B(x,r))}{\ln(r)}.\] We similarly define the \emph{lower  local dimension} at the point $x\in \supp\mu$ as
\[\ldim\mu(x)=\liminf_{r\to0}\frac{\ln(\mu(B(x,r))}{\ln(r)}.\] If the upper and lower local dimensions coincide, then we simply call it \emph{local dimension} and use the notation $\dim\mu(x)$.
\end{defi}
Note that the local dimension is a way of measuring how much mass is concentrated around a point. The larger the local dimension, the less mass around the point. The more mass around a point, the lower the local dimension. A measure with an atomic point $x_0$ will have local dimension $0$ at the point $x_0$. 

Given an IFS $\cS$, we will give a special name to the similarities that interact with the boundaries of the attractor. 
\begin{defi}
    Let $\cS=\{S_i\}_{i=1}^n$ with attractor $\K$. We call $S_j$ a boundary similarity if $S_j(\K)\cap\partial\K\neq\emptyset$.
\end{defi}
Inspired by the standard technical assumption in \cite{HareHareMatthews}, we give the next set of technical assumptions.
\begin{defi}[Technical Assumptions]
Let $\cS=\{S_i\}_{i=0}^m$ be an IFS and $\mu=\sum p_iS_i^{-1}$ be the associated self-similar measure. We say $\mu$ satisfies the \emph{standard technical assumptions} if
    \begin{itemize}
        \item $\cS$ satisfies the FNC.
        \item The attractor $K=\K$.
        \item Let $p_{min}=\min_{i} p_i$, if $S_j$ is a boundary transformation, then $p_j=p_{min}$. 
        \item All contraction ratios are the same, i.e., $\cS$ is equicontractive.
    \end{itemize}

We note that we can omit the measure and say that the IFS satisfies or meets the technical assumptions.
\end{defi}
When an IFS $\cS=\{S_i\}_{i=0}^m$ satisfies the technical assumptions; the IFS satisfies the FTC. This follows from the definitions. Let $0<r<1$ be the contraction ration of $\cS$: then instead of considering all levels of $\delta$, it is enough to consider when $\delta=r^n$. With this in mind, for ease of notation, we write $\sS_n:=\sS_{r^n}$, $\F_n:=\F_{r^n}$, and $\Delta_n=\Delta_{r^n}$. 

We define a transition matrix from $T_{n-1,n}(x):\Delta_{n-1}(x)\to\Delta_n(x)$. We index the row by the images of $K$ covering $\Delta_{n-1}$ at level $n-1$. we do not list $S_\sigma$ if $S_\sigma=S_\tau$. The columns are indexed by the images that cover $\Delta_n$ at level $n$.

Consider $\sigma$ with $|\sigma|=n-1$ and $S_\sigma(\K)\supset\Delta_{n-1}$. Then $S_\sigma$ corresponds to the row $i$. Let $j\in\{0,1,\dots,m\}$. If $S_{\sigma j}(\K)\supset\Delta_n$, then $S_{\sigma j}$ corresponds to a column, say column $k$, then we set $(i,k)=p_j$. If for all $S_{\sigma j}$ does not correspond to column $k$, we set $(i,k)=0$. The order of columns can be done in lexicographic order.  The order itself isn't important, so long as it is consistently applied at every stage of the algorithm.

We can see how these transition matrices are obtained in Example \ref{Netintervalslookweird}.

We set
\[T_{0,n}(x)=T_{0,1}(x)T_{1,2}(x)\dots T_{n-1,n}, \text{ and}\]
\[P_n(x)=\text{sum of the entries of }T_{0,n}(x).\]
The goal is to show that $T_{0,n}(x)$ and $P_n(x)$ have all the information we need to calculate the local dimension of $x$. In particular, the tails of the sequences $\{P_n(x)\}$ and $\{T_{0,n}(x)\}$ have encoded the information necessary to calculate the local dimension of $\mu$ at $x$. In Theorem \ref{thm5.4} we show $P_n(x)\approx \mu(\Delta_n(x))$. We
will show that for a point $x$ in the boundaries of $S_\sigma(\K)$ we can choose any of the net intervals that contain $x$. 
\begin{thm}\label{thm5.4}
    Let $\cS=\{S_i\}_{i=1}^n$ be an IFS that meets the technical assumptions. Then there exists a $C>0$ such that \begin{equation}
        CP_n(x)\leq \mu(\Delta(x))\leq P_n(x).
    \end{equation}
\end{thm}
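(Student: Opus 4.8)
The plan is to relate the matrix product $P_n(x)$ to the measure $\mu(\Delta_n(x))$ by showing that each is, up to a uniformly bounded multiplicative constant, a sum over the symbolic words $\sigma$ of length $n$ with $S_\sigma(\K) \supset \Delta_n(x)$ of the corresponding weights $p_\sigma = \prod_{i} p_{\sigma_i}$. First I would make explicit the combinatorial meaning of the matrix product: by the way $T_{n-1,n}(x)$ is defined, the $(i,k)$ entry records the probability $p_j$ attached to appending a letter $j$ that keeps $\Delta_n(x)$ covered, so that the entries of the product $T_{0,n}(x) = T_{0,1}(x)\cdots T_{n-1,n}(x)$ enumerate exactly the admissible paths $\epsilon \to \sigma_1 \to \sigma_1\sigma_2 \to \cdots \to \sigma$ where each prefix covers the corresponding net interval. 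Summing all entries then gives
\[
P_n(x) = \sum_{\substack{|\sigma|=n \\ S_\sigma(\K) \supset \Delta_n(x)}} p_\sigma,
\]
modulo the bookkeeping that repeated maps $S_\sigma = S_\tau$ are listed only once. Establishing this identity carefully is the organizational heart of the proof.

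Next I would turn to the measure side. Since $\mu = \sum_i p_i\, \mu \circ S_i^{-1}$, unfolding this relation $n$ times yields $\mu = \sum_{|\sigma|=n} p_\sigma\, \mu \circ S_\sigma^{-1}$, so that
\[
\mu(\Delta_n(x)) = \sum_{|\sigma|=n} p_\sigma\, \mu\bigl(S_\sigma^{-1}(\Delta_n(x))\bigr).
\]
The words contributing to this sum split into those with $S_\sigma(\K) \supset \Delta_n(x)$ — which contribute $p_\sigma \cdot \mu(S_\sigma^{-1}(\Delta_n(x)))$ with $S_\sigma^{-1}(\Delta_n(x))$ a fixed-shape piece of $\K$ carrying $\mu$-mass bounded away from $0$ and above by $1$ — and those words whose image $S_\sigma(\K)$ meets $\Delta_n(x)$ only in its boundary. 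The upper bound $\mu(\Delta_n(x)) \le P_n(x)$ should follow from $\mu(S_\sigma^{-1}(\Delta_n(x)))\le 1$ together with accounting for the boundary-only words; here the technical assumption that boundary similarities carry the minimal weight $p_{\min}$ is what lets me absorb those extra boundary contributions, since their mass is comparable to (a constant times) the mass of a genuinely-covering word of the same length.

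For the lower bound $CP_n(x) \le \mu(\Delta_n(x))$ the key point is a uniform lower estimate on $\mu(S_\sigma^{-1}(\Delta_n(x)))$. Because the FNC holds and the IFS is equicontractive with full support $K = \K$, the normalized net interval $T_{\Delta_n(x)}^{-1}(\Delta_n(x))$ takes only finitely many shapes (finitely many neighbor sets), and each such normalized piece has nonempty interior meeting $K$, hence positive $\mu$-measure; taking $C$ to be (a suitable power involving $p_{\min}$ times) the minimum of these finitely many positive measures gives a constant independent of $n$ and $x$. I expect the main obstacle to be the boundary analysis: precisely handling words $\sigma$ for which $S_\sigma(\K)$ touches $\Delta_n(x)$ only along a face, ensuring these are correctly either included or excluded in the matrix count, and verifying that the $p_{\min}$ assumption genuinely controls the discrepancy between $P_n(x)$ and $\mu(\Delta_n(x))$ uniformly in $n$. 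This is exactly where the argument must diverge from the one-dimensional case of \cite{HareHareMatthews}, since in $\R^d$ a net interval can be bounded by several faces and the set $\cup_{\sigma}S_\sigma(\partial\K)$ has a more intricate structure; controlling it uniformly via the finitely many neighbor sets is the step I would treat most carefully.
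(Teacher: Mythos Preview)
Your approach and the paper's coincide on the essential point: both reduce the theorem to the combinatorial identity
\[
P_n(x)=\sum_{\substack{|\sigma|=n\\ \Delta_n(x)\subset S_\sigma(\K)}}p_\sigma,
\]
which the paper proves by a short induction on $n$ (your path--counting description of the entries of $T_{0,1}(x)\cdots T_{n-1,n}(x)$ is exactly that induction, unwound). The paper then writes only ``From this the result will follow'' and gives no further argument for the two inequalities; your outline---unfolding $\mu=\sum_{|\sigma|=n}p_\sigma\,\mu\circ S_\sigma^{-1}$, bounding each $\mu(S_\sigma^{-1}(\Delta_n(x)))$ above by $1$, and bounding it below by the minimum over the finitely many shapes guaranteed by the FNC---is the natural way to complete what the paper leaves implicit, and it is correct.

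One correction: the hypothesis that boundary similarities carry weight $p_{\min}$ is not what makes Theorem~\ref{thm5.4} work. For the upper bound you only need that a non-covering word $\tau$ satisfies $S_\tau^{-1}(\Delta_n(x))\cap\K\subset\partial\K$, which follows directly from the definition of $\Delta_n(x)$, together with $\mu(\partial\K)=0$; for the lower bound your FNC argument already gives a uniform positive constant without any reference to $p_{\min}$. The $p_{\min}$ assumption on boundary maps is instead the engine of Theorem~\ref{thm5.5} (comparing $\mu(\Delta_n(x))$ with $\mu(\Delta_n(y))$ for adjacent net intervals), where it is used explicitly in the inductive estimate. So your anticipated ``main obstacle''---controlling boundary contributions via $p_{\min}$---belongs one theorem later; here the boundary analysis is lighter than you expect.
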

\begin{proof}
    We will show by induction that \begin{equation}
        P_n(x)=\sum_{\substack{|\sigma|=n\\ \Delta_n(x)\subset S_\sigma(\K)}}p_\sigma
    \end{equation}
    From this the result will follow. 
For $n=1$ we have
\[P_1(x)=\text{sum of entries of }T_{0,1}(x)\]
and
\[T_{0,1}=[p_i,\dots p_k]\text{ with }\Delta_1\subset S_j(\K).\]
This gives $|\sigma|=1$ with $\Delta_1\subset S_\sigma(\K)$ contributing to the sum of $P_1(x)$.

Consider $P_n(x)$ as the sum of the entries in \[T_{0,n}(x)=\underbrace{T_{0,1}(x)T_{1,2}(x)\dots T_{n-2,n-1}(x)}_{[a_1,\dots ,a_\ell]} T_{n-1,n}.\]
Here $a_1$ corresponds to the sum of al $p_\sigma$ where $S_\sigma$ corresponds to the first row of $T_{n-1,n}$. Similarly for $a_2,\dots,a_\ell$.

For $S_\tau$ with $|\tau|=n$ and $\Delta_n(x)\subset S_\tau(\K)$ we have $\tau=\sigma j$ for some $|\sigma|=n-1$. Furthermore, $S_\sigma$ matches up to one of the $a_1,\dots,a_\ell$  and so $S_\tau$ corresponds to some $b_1,\dots, b_k$ where
\[T_{0,n}(x)=T_{0,1}(x)T_{1,2}(x)\dots T_{n-1,n}=[b_1,\dots,b_k].\]
That is, if $\sigma$ matches to $a_i$, $\tau=\sigma j$ matches to $b_m$, then we have $p_j$ in the $(i,m)$ entry of the matrix.
\end{proof}
Theorem \ref{thm5.5} will solve the difficulty of choosing the net interval when $x$ is in multiple net intervals. 
\begin{thm}\label{thm5.5}
    Let $\cS=\{S_i\}_{i=1}^n$ satisfy the technical assumptions. Then there exists a $c>0$ such that if $\Delta_n(x)$ and $\Delta(y)$ are adjacent, then
    \begin{equation}\label{adjacentnet}
        \frac{1}{c n}\mu(\Delta(y))\leq \mu(\Delta(x))\leq cn\mu(\Delta(y)).
    \end{equation}
\end{thm}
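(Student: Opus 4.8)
The plan is to reduce everything to the quantities $P_n$ via Theorem \ref{thm5.4} and then to compare $P_n(x)$ and $P_n(y)$ combinatorially, using the explicit description $P_n(x)=\sum_{|\sigma|=n,\ \Delta_n(x)\subset S_\sigma(\K)}p_\sigma$ obtained in its proof. Since $CP_n(z)\le \mu(\Delta_n(z))\le P_n(z)$ for every $z$, the estimate \eqref{adjacentnet} follows once we prove that there is a constant $c>0$ with $P_n(x)\le c\,n\,P_n(y)$ (and, by the symmetric roles of $x$ and $y$, also $P_n(y)\le c\,n\,P_n(x)$). Throughout I would fix the shared boundary of the adjacent net intervals $\Delta_n(x)$ and $\Delta_n(y)$; under the standard technical assumptions the IFS is equicontractive and each $S_\sigma(\K)$ is an axis-parallel cube of side $r^n$, so this shared boundary lies in a single affine hyperplane $H$, with $\Delta_n(x)$ and $\Delta_n(y)$ on opposite sides of $H$.

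Next I would split the words contributing to $P_n(x)$ and $P_n(y)$ according to how they meet $H$. Writing $A=\{\sigma:\Delta_n(x)\subset S_\sigma(\K)\}$ and $B=\{\sigma:\Delta_n(y)\subset S_\sigma(\K)\}$, a word in $A\cap B$ is one whose cube contains $H$ in its interior near the shared face, and such words contribute the same amount $C_n=\sum_{\sigma\in A\cap B}p_\sigma$ to both $P_n(x)$ and $P_n(y)$; in particular $C_n\le P_n(y)$. Thus it suffices to control the one-sided sum $X_n=\sum_{\sigma\in A\setminus B}p_\sigma$, i.e.\ the total weight of cubes that cover $\Delta_n(x)$ while abutting $H$ only from the $x$-side, and to show $X_n\le c\,n\,P_n(y)$.

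The heart of the argument is a decomposition of $A\setminus B$ by the level at which a word \emph{parts company} with the $y$-side. For $\sigma\in A\setminus B$ let $m(\sigma)$ be the largest index with $S_{\sigma|_{m}}(\K)$ still straddling $H$ (equivalently, still covering the level-$m$ net interval on the $y$-side); since $S_\epsilon(\K)=\K$ straddles $H$ while $S_\sigma(\K)$ does not, we have $1\le m(\sigma)\le n$. At the transition the prefix $\sigma|_{m-1}$ is a common straddling word, while the symbol $\sigma_{m}$ drives the cube to the $x$-side of $H$; by self-similarity this $\sigma_{m}$ is a boundary similarity, so $p_{\sigma_m}=p_{\min}$ by the technical assumptions. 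Grouping $X_n=\sum_{m=1}^{n}X_n^{(m)}$ with $X_n^{(m)}$ the weight of words crossing at level $m$, I would bound each $X_n^{(m)}\le c\,P_n(y)$: for a fixed straddling prefix $\rho=\sigma|_{m-1}$ the full-support hypothesis $K=\K$ guarantees that the $x$-side completions of $\rho$ reaching $\Delta_n(x)$ can be matched with $y$-side completions of $\rho$ reaching $\Delta_n(y)$, the finiteness of neighbor sets bounds the number of cubes involved, and the boundary weight $p_{\min}$ appears identically on both sides. Summing over the $n$ possible crossing levels produces the linear factor, giving $P_n(x)=C_n+X_n\le (1+c\,n)P_n(y)$.

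The main obstacle I anticipate is exactly this matching lemma: showing that, uniformly in $m$ and $n$, the total $p$-weight of completions of a straddling prefix that land in $\Delta_n(x)$ is comparable to the total weight of those landing in $\Delta_n(y)$. This is where the three standing hypotheses must be used together — full support to ensure both sides of $H$ are genuinely populated by cubes of the IFS, the finite neighbor condition to keep the combinatorics of these cubes uniformly bounded, and the condition $p_j=p_{\min}$ on boundary similarities to make the boundary-crossing cost symmetric in $x$ and $y$. Once this is in place, the symmetric bound follows by interchanging $x$ and $y$, and translating back through Theorem \ref{thm5.4} yields \eqref{adjacentnet} with a suitable constant.
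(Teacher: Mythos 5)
Your reduction to showing $P_n(x)\le c\,n\,P_n(y)$ via Theorem \ref{thm5.4} is the same first step as the paper's, and your decomposition of $A\setminus B$ by the crossing level $m$ is, in spirit, the unrolled form of what the paper does. But the proof has a genuine gap at exactly the point you flag: the ``matching lemma'' is the entire content of the theorem, and the ingredients you cite for it do not suffice. Concretely, your argument pays the boundary weight $p_{\min}$ only once, at the crossing letter $\sigma_m$. With that alone, the best available bound on a crossing block is $X_n^{(m)}\le p_{\min}\,P_{m-1}(y)\cdot(\text{completion weight})$, and if the completion weight is estimated trivially by $1$, the only way to compare $P_{m-1}(y)$ with $P_n(y)$ is the full-support inequality $P_n(y)\ge p_{\min}^{\,n-m+1}P_{m-1}(y)$, which turns your bound into $X_n^{(m)}\le p_{\min}^{-(n-m)}P_n(y)$ --- exponentially worse than the constant you need, so summing over $m$ gives nothing. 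The finite neighbor condition cannot rescue this: it bounds the combinatorics at a single level, not the total $p$-weight of completions across $n-m$ levels of subdivision. The missing idea, which is what the paper's induction encodes, is that \emph{every} letter after the crossing must again be a boundary similarity: once $S_{\sigma|_\ell}(\K)$ lies on the $x$-side, the sub-cube covering $\Delta_{\ell+1}(x)$ still touches the common boundary of the two net intervals, which sits inside $\partial S_{\sigma|_\ell}(\K)$, so by the technical assumptions each subsequent letter also has weight exactly $p_{\min}$. Hence the completion weight is of order $p_{\min}^{\,n-m}$, and this is precisely what cancels the factor $p_{\min}^{-(n-m+1)}$ above, leaving the uniform bound $X_n^{(m)}\lesssim p_{\min}^{-1}P_n(y)$. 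The paper packages these two facts as the recursion $P_n(x)\le p_{\min}P_{n-1}(x)+P_{n-1}(y)$ (boundary cost at every level after crossing) together with $p_{\min}P_{n-1}(y)\le P_n(y)$ (full support), and closes it by induction on $n$ with a separate base-case constant $c_0$ for the same-parent case.

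A secondary problem is your standing geometric assumption that the shared boundary of $\Delta_n(x)$ and $\Delta_n(y)$ lies in a single affine hyperplane $H$ with the two net intervals on opposite sides. In $\R^d$ the net intervals of Definition \ref{netinterval} need not be convex --- see the net intervals $7$ and $8$ in Example \ref{Netintervalslookweird}, which are unions of two squares meeting at a corner --- so adjacency need not be adjacency across one hyperplane, and your definition of $m(\sigma)$ (``largest index still straddling $H$'') is not well posed in general. The paper's inductive argument never needs this picture: it works only with the parent--child relation between net intervals and the covering sets $D_1$, $D_2$, $E$, which is why it survives the awkward geometry of higher-dimensional net intervals. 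If you want to repair your write-up while keeping its structure, replace ``straddles $H$'' by ``covers the level-$m$ ancestor $\Delta_m(y)$,'' prove that a word covering $\Delta_\ell(x)$ but not $\Delta_\ell(y)$ can only be extended to cover $\Delta_{\ell+1}(x)$ through boundary similarities, and then run the telescoping sum; at that point you will have reconstructed the paper's induction.
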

    
    \begin{proof}
        From previous results it is enough to prove that there exists a $c>0$ such that
        \[P_n(x)<cnP(y).\]
        This is clearly true for $n=1$ as there are only a finite numbers of $\Delta_1$ with a constant $c_0$. Define
        \[c=\max\{c_0,1/p_{min}\}.\]
        Assume Equation \eqref{adjacentnet} is true up to $n-1$. There are two cases at level $n$, $\Delta_n(x)$ and $\Delta_n(y)$ can have the same parent or different parents.

        \begin{enumerate}[label=Case \arabic*:]
            \item Assume $\Delta_n(x)$ and $\Delta_n(y)$ have the same parent, i.e. $\Delta_{n-1}(x)=\Delta_{n-1}(y)$.
            From the construction of $c_0$ we have
            \[\sum_{\Delta_n(x)\subset S_{\sigma j}(\K)}p_{\sigma j}\leq c_0 \sum_{\Delta_n(y)\subset S_{\sigma k}(\K)}p_{\sigma k}.\]
            Adding this over all $|\sigma|=n-1$ with $\Delta_{n-1}\subset S_\sigma(\K)$ we get
            \[P_n(x)\leq c_0 P_n(y)\leq c P_n(y)\leq c n P_n(y).\]
            \item Assume $\Delta_n(x)$ and $\Delta_n(y)$ have a different parent. Then $\Delta_{n-1}(x)$ and $\Delta_{n-1}(y)$ are adjacent and $P_{n-1}(x)\leq C(n-1)P_{n-1}(y)$. We recall
            \[        P_n(x)=\sum_{\substack{|\sigma|=n-1\\ \Delta_{n-1}(x)\subset S_\sigma(\K)}}p_\sigma.\]
            We consider
            \begin{align*}
                D_1&=\{\sigma: |\sigma|=n-1, \Delta_{n-1}(x)\subset S_\sigma(\K),\Delta_{n-1}(y)\not\subset S_\sigma(\K)\},\\
                D_2&=\{\sigma: |\sigma|=n-1, \Delta_{n-1}(x)\not\subset S_\sigma(\K),\Delta_{n-1}(y)\subset S_\sigma(\K)\},\\
                E&=\{\sigma: |\sigma|=n-1, \Delta_{n-1}(x)\subset S_\sigma(\K),\Delta_{n-1}(y)\subset S_\sigma(\K)\}.\\
            \end{align*}
        \end{enumerate}
        Recall that $\sum_ip_i=1$, hence
        \begin{align*}
            P_n(x)&\leq \sum_{\sigma\in D_1}p_\sigma p_{min}+\sum_{\sigma\in E}p_\sigma\overbrace{\sum_{i=1}p_i}^{=1}\\
            &\leq p_{min}\sum_{\sigma\in D_1}p_\sigma+\sum_{\sigma\in E}p_\sigma+\overbrace{p_{min}\sum_{\sigma\in E}p_\sigma+\sum_{\sigma\in D_2}p_\sigma}^\text{add this}\\
            &=p_{min}\sum_{\sigma\in D_1\cup E}p_\sigma+\sum_{\sigma\in D_2\cup E}p_\sigma\\
            &=p_{min}P_{n-1}(x)+P_{n-1}(y)\\
            &\leq \overbrace{p_{min}c(n-1)P_{n-1}(y)}^\text{induction}+\overbrace{cp_{min}}^{>1}P_{n-1}(y)\\
            &=p_{min}n P_{n-1}(y)\\
            &\leq cnP_n(y).
        \end{align*}
        Where the last inequality is true since $p_{min}\leq p_i$ for all $i$ and hence $p_{min}P_{n-1}(y)\leq P_n(y)$. This gives
        \[P_n(x)\leq P_n(y).\]
    \end{proof}
    Theorem \ref{thm5.6} combines Theorem \ref{thm5.4} and \ref{thm5.5} to get the local dimension at a point $x$. 
    \begin{thm}\label{thm5.6}
    Let $\cS=\{S_i\}_{i=1}^n$ satisfy the technical assumptions. Then
    \begin{equation}
        \udim\mu(x)=\limsup_{n\to\infty}\frac{\log(\mu(\Delta_n(x))}{\log(r^n)}=\limsup_{n\to\infty}\frac{\log(P_n(x))}{\log(r^n)}.
    \end{equation}
    A similar result holds for the lower local dimensions and for when the local dimension exists. 
    \end{thm}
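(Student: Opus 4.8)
The plan is to prove the two equalities separately. The right-hand equality follows at once from Theorem \ref{thm5.4}: since $CP_n(x)\le\mu(\Delta_n(x))\le P_n(x)$, taking logarithms gives $\log C+\log P_n(x)\le\log\mu(\Delta_n(x))\le\log P_n(x)$, and dividing by $\log(r^n)=n\log r\to-\infty$ shows the two quotients differ by $O(1/n)$. Hence their $\limsup$'s coincide (and likewise their $\liminf$'s). This reduces everything to comparing the continuous quantity $\mu(B(x,\rho))$ with the discrete net-interval measures $\mu(\Delta_n(x))$.

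First I would discretize the radius. Given $\rho>0$, choose $n$ with $r^{n+1}<\rho\le r^n$; then $B(x,r^{n+1})\subseteq B(x,\rho)\subseteq B(x,r^n)$ and $\log r^{n+1}\le\log\rho\le\log r^n$, all negative. Since $\log r^{n+1}/\log r^n\to1$, a squeeze argument yields
\[\udim\mu(x)=\limsup_{n\to\infty}\frac{\log\mu(B(x,r^n))}{\log(r^n)},\]
and the analogous identity for the $\liminf$.

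The core step is the comparison $\mu(B(x,r^n))\asymp\mu(\Delta_n(x))$ up to a factor polynomial in $n$. For the lower direction, every covering map $S_\sigma$ with $\Delta_n(x)\subset S_\sigma(\K)$ has $r_\sigma<r^n$, so $S_\sigma(\K)$ has side less than $r^n$ and $\Delta_n(x)\subseteq B(x,\sqrt d\,r^n)$, whence $\mu(\Delta_n(x))\le\mu(B(x,\sqrt d\,r^n))$. For the upper direction, I would first bound the number of level-$n$ net intervals meeting $B(x,r^n)$: by the WSC estimate of Proposition \ref{prop2.8} only boundedly many images $S_\sigma(\K)$ with $\sigma\in\Lambda_{r^n}$ meet a ball of radius comparable to $r^n$, and since the net intervals in that region are carved out of finitely many such images, their number is bounded by a constant $M$ independent of $n$. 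Any two of these net intervals are joined by a chain of adjacent net intervals of length at most $M$, so iterating Theorem \ref{thm5.5} gives $\mu(\Delta)\le(cn)^{M}\mu(\Delta_n(x))$ for each of them, and summing produces $\mu(B(x,r^n))\le M(cn)^{M}\mu(\Delta_n(x))$.

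Taking logarithms in both directions, the polynomial factor $M(cn)^M$ contributes only $O(\log n)$ and the enlargement of the radius from $r^n$ to $\sqrt d\,r^n$ shifts the relevant level by a bounded amount; after dividing by $\log(r^n)=n\log r$ both effects vanish, so
\[\limsup_{n\to\infty}\frac{\log\mu(B(x,r^n))}{\log(r^n)}=\limsup_{n\to\infty}\frac{\log\mu(\Delta_n(x))}{\log(r^n)}.\]
This gives the upper-dimension identity; the lower version is verbatim with $\liminf$, and when the two agree the local dimension exists and equals the common value. The same comparison resolves the boundary ambiguity flagged before the statement: if $x$ lies in several level-$n$ net intervals, Theorem \ref{thm5.5} shows their measures agree up to the factor $cn$, which disappears in the limit, so the choice of $\Delta_n(x)$ is immaterial. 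I expect the main obstacle to be the uniform-in-$n$ bound on the number of level-$n$ net intervals meeting a ball of radius $\asymp r^n$, together with checking that the linearly growing constant of Theorem \ref{thm5.5}, accumulated over a chain of bounded length, stays negligible after division by $\log(r^n)$.
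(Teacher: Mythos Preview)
The paper does not actually supply a proof of Theorem~\ref{thm5.6}; it is presented as a corollary with only the sentence ``Theorem~\ref{thm5.6} combines Theorem~\ref{thm5.4} and~\ref{thm5.5} to get the local dimension at a point $x$'' standing in for an argument. Your proposal is a correct and careful realization of precisely this combination: you use Theorem~\ref{thm5.4} to pass between $\mu(\Delta_n(x))$ and $P_n(x)$, and you use Theorem~\ref{thm5.5} iterated along a bounded chain of adjacent net intervals to compare $\mu(B(x,r^n))$ with $\mu(\Delta_n(x))$, both up to factors that are polynomial in $n$ and hence negligible after dividing by $n\log r$.

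The only step worth tightening is the uniform bound $M$ on the number of level-$n$ net intervals meeting $B(x,r^n)$ and the existence of an adjacency chain of bounded length between any two of them. Under the technical assumptions the attractor is the full hypercube, so the level-$n$ net intervals tile $\K$; the WSC (implied by the FNC) bounds the number of distinct images $S_\sigma(\K)$, $\sigma\in\Lambda_{r^n}$, meeting a ball of radius comparable to $r^n$, and those finitely many cubes cut the region into a number of cells bounded purely in terms of $d$ and that count. Connectedness of $B(x,r^n)\cap\K$ then gives the adjacency chain. With that detail filled in, your argument is complete and matches exactly what the paper asserts without writing out.
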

    Following what was done in \cite{HareHareMatthews} we will organize this information on a weighted directed graph. This graphs will have as vertices the normalize net intervals. We put a directed edge from $\Delta_1$ to $\Delta_2$ if $\Delta_2$ is a child of $\Delta_1$. We give this edge weight $T_{\Delta_1,\Delta_2}$. We show an example of this in Example \ref{Netintervalslookweird}. 

    In the rest of this section, we will show that the associated graphs have a unique \emph{essential class} (see Definition \ref{essential}). This will imply that the associated graphs share the same properties as the associated graphs in \cite{HareHareMatthews}.
 
    \begin{defi}\label{essential}
        We say that a collection of net intervals $\mathscr{L}$ is a loop class (strongly connected component) if for all $A,B\in \mathscr{L}$ there is a path from $A$ to $B$ staying in $\mathscr{L}$.

        We say that a loop class is \emph{maximal} if it is not contained as a proper subset of another loop class. We say that a maximal loop class is \emph{essential} if all children of the net interval remain in the loop class.
    \end{defi}
    \begin{thm}\label{thmessentialclass}
            Let $\cS=\{S_i\}_{i=1}^n$ satisfy the technical assumptions. Then the associated graph has a unique essential class.
    \end{thm}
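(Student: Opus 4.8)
The plan is to show that the collection of all net intervals at all levels, organized into the weighted directed graph, possesses exactly one essential class. Recall that a net interval is identified with its neighbor set, and by the FNC there are only finitely many neighbor sets, so the graph has finitely many vertices; edges are generated by the parent--child relation of net intervals. First I would establish existence of at least one essential class: since the graph is finite, following any path eventually enters a strongly connected component from which one cannot escape (this is standard for finite directed graphs where every vertex has an outgoing edge, which holds here because every net interval has children). So there is at least one maximal loop class all of whose outgoing edges remain inside it, i.e., an essential class.

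The main content is \emph{uniqueness}. The natural strategy is to identify a distinguished neighbor set that every net interval's descendants must eventually reach, and then show that this neighbor set's loop class absorbs everything reachable. The key observation is that the full support hypothesis ($K=\K$) together with the equicontractive assumption forces a canonical ``interior'' configuration. Concretely, I would argue that for any net interval $\Delta$, by iterating deep enough one finds a descendant net interval lying strictly in the interior of some $S_\sigma(\K)$, whose neighbor set corresponds to the single-neighbor (trivial overlap) type --- the net interval covered by exactly one image of $K$. Because $K=\K$ has nonempty interior and the convex hull is the full hypercube, every $S_\sigma(\K)$ contains interior points covered by no other image at a sufficiently fine level; these generate net intervals with a fixed ``trivial'' neighbor set $\mathcal{V}_0$. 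Thus $\mathcal{V}_0$ is reachable from every vertex, so any essential class must contain $\mathcal{V}_0$; since there is only one essential class containing a given reachable vertex, uniqueness follows.

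The precise steps I would carry out are: (i) show every net interval $\Delta_n(x)$ has a child, so the graph has no sinks and every infinite path exists; (ii) show the trivial neighbor set $\mathcal{V}_0$ (a net interval sitting in the interior of a single $S_\sigma(\K)$ with no other covering image) actually occurs, using $K=\K$ and that at fine levels the images $S_\sigma(\K)$ have interior points not met by $\partial(S_\tau(\K))$ for $\tau\neq\sigma$; (iii) show $\mathcal{V}_0$ is reachable from \emph{every} vertex --- given any $\Delta$, descend into the interior of one of its covering images and subdivide until the trivial type appears; (iv) conclude that the loop class of $\mathcal{V}_0$ is the unique essential class, since every essential class is reachable-closed and must therefore contain $\mathcal{V}_0$, and two distinct essential classes cannot both contain $\mathcal{V}_0$ (they would coincide).

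The hard part will be step (iii): rigorously producing, from an \emph{arbitrary} net interval, a descending chain of net intervals that provably lands in the trivial neighbor set. The subtlety is that overlaps can persist across many levels, so one must quantify how the normalized size $m(\Delta)$ and the covering multiplicity behave under subdivision, and invoke that the number of neighbor sets is finite (FNC) to guarantee that the interior-shrinking process terminates at $\mathcal{V}_0$ rather than cycling through nontrivial overlap types forever. I expect to handle this by a compactness/interior argument: choose a point $x$ in the interior of $K=\K$ lying in exactly one $S_\sigma(\K)$ at every sufficiently small scale (such $x$ exist off the countable union of boundary images $\cup_\sigma S_\sigma(\partial\K)$), and trace the net intervals $\Delta_n(x)$, which for large $n$ have the trivial neighbor set; then connect $\Delta$ to this chain via the parent--child edges, using that any $\Delta$ has a descendant containing such an interior point.
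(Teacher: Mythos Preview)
Your overall strategy is sound and matches the paper's: show that some distinguished neighbor set is reachable from every vertex, which forces uniqueness of the essential class. The gap is in your choice of distinguished vertex. You assert that a ``trivial'' singleton neighbor set $\mathcal{V}_0$ exists---a net interval covered by exactly one $S_\sigma$---but this can fail under the technical assumptions. Take the IFS on $[-1/2,1/2]$ given by
\[
S_1(x)=\tfrac{x}{2}-\tfrac14,\quad S_2(x)=\tfrac{x}{2}+\tfrac14,\quad S_3(x)=-\tfrac{x}{2}-\tfrac14,\quad S_4(x)=-\tfrac{x}{2}+\tfrac14,
\]
with uniform weights. Here $S_1(\K)=S_3(\K)=[-1/2,0]$ and $S_2(\K)=S_4(\K)=[0,1/2]$, but $S_1\neq S_3$ and $S_2\neq S_4$. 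Every net interval at every level has neighbor set $\{x,-x\}$; no singleton neighbor set ever appears. Your step~(ii) therefore fails, and the justification you give for it---that points off the boundary union $\bigcup_\sigma S_\sigma(\partial\K)$ lie in exactly one image---conflates ``not on a boundary'' with ``uniquely covered,'' which are different conditions.

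The paper avoids this by choosing a different distinguished vertex: among the finitely many neighbor types, take one of \emph{minimal normalized volume}, and among those, one with \emph{maximal cover}. Call it $\Delta$, with covering maps $S_{\sigma_1},\dots,S_{\sigma_k}$. Given any net interval $\Delta'$, use full support to find $S_{\tau\sigma}(\K)\subset\Delta'$, and transplant the configuration of $\Delta$ inside via $S_{\tau\sigma}$. The resulting set $B=S_{\tau\sigma}(\Delta)$ contains a net interval whose normalized volume is at most that of $\Delta$ and whose cover count is at least $k$; extremality of $\Delta$ forces this net interval to be of type $\Delta$. Thus $\Delta$ is reachable from every vertex, and uniqueness follows exactly as in your step~(iv). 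Your argument can be repaired by replacing $\mathcal{V}_0$ with this extremal neighbor type.
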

\begin{proof}
Since the IFS satisfies the finite neighbor condition, there are finitely many net intervals. Set $\Delta$ to be the net interval of minimal volume, if there is more than one of this size, choose the one with maximal cover. Let $S_{\sigma_1},\dots,S_{\sigma_k}$ be the cover of $\Delta$. 

We will show that every net interval $\Delta'$ will have a descendant of type $\Delta$. Hence, the net interval $\Delta$ will be in the essential class, and therefore the essential class is unique.

Recall
\[\Delta=\left((S_{\sigma_1}(\K)\bigcap_{i=2}^kS_{\sigma_i}(\K)\right)\setminus \left(\bigcup \inte(S_{\sigma_l}(\K))\right).\]
Let $\Delta'$ be a net interval and $S_\tau$ such that $\Delta'\subset S_\tau(\K)$. Note that there is $\sigma$ such that $S_{\tau\sigma}([0,1])^d\subset\Delta'$, consider
\[B=\left(S_{\tau\sigma\sigma_1}(\K)\bigcap_{i=2}^kS_{\tau\sigma\sigma_i}(\K)\right)\setminus \left(\bigcup \inte(S_{\tau\sigma\sigma_l}(\K))\right).\]
Note that $B$ needs to be non-empty since $\Delta$ is non-empty ($S_{\tau\sigma}^{-1}(B)=\Delta$). Note that $B$ has normalize measure at most the same as $\Delta$ and maximal cover. Hence $B=\Delta$.
\end{proof}
\begin{remark}\label{Eclaasremark}
    It is important to note that the existence of a unique essential class is a property of the IFS and not the self-similar measure. In other words, we do not require the standard technical assumptions for the existence of a unique essential class.
\end{remark}
We note that a lot of the properties one can deduce for these types of measures are properties that are described through the graph. The graph we obtain in this setting shares the same properties as the one obtained in \cite{HareHareMatthews}. This means that we can deduce all the same properties that were deduced in \cite{HareHareMatthews}. We refer the reader to that paper to learn about the density of cyclic points and some other important properties.

Finally, we present an example.

\input{Example}
\section{Open Questions}

Throughout this paper, we have seen that the techniques and ideas that work for IFS in $\R$ can be extended to $\R^d$. But there is a cost, the geometry becomes increasingly foreign and many of the tricks that we could use in $\R$ are simply lost. In $\R^d$ rotations become a problem, whereas in $\R$ we can think of only having two rotations, the identity or the reflection of the interval. In $\R^d$ we have infinitely many rotations.

In \cite{hare/hare/rutar}, the underlying order is used to define the classical net intervals. In $\R^d$ we don't have an order. This underlying structure is used again to define the classical neighbor set and to see the equivalence between GFTC\textsubscript{co} and FNC in $\R$. In $\R^d$, we are unable to obtain this equivalence. We leave as a conjecture that these two overlapping conditions are equivalent in general. Part of the problems we saw when trying to prove this conjecture is on the rotations and how to give a normalization that incorporates more information of the IFS. 

We introduce early on the notions of net intervals and neighbor sets; we note that these are not the only ways to define them. It is worth mentioning that choosing how to normalize was done with some of the results that came later on in mind. A different choice could be made, for example choosing to normalize with respect to the volume of the net intervals. This makes it hard to know what can actually fit inside the net interval, as we have seen that they have no simple shapes. 

If we allow for rotations that do not fix the hypercube, it is unclear whether the WSC and the GFTC\textsubscript{co} are equivalent.

When working with self-similar measures, anything discussing how to weaken the technical assumptions is open in $\R^d$. Some ideas of how to proceed in a different direction can be found in \cite{Hare_Hare_ShingNg_2018} and in \cite{HARE20181653}. 

Finally, it would be interesting to see if there is anything that can be said about IFS with irrational rotations. We have only been able to work with cases where these are excluded. We believe that, in general, irrational rotations with overlaps probably will not allow for WSC.
\nocite{hare2016local}
\printbibliography[heading=bibintoc,title={References}]
\end{document}